\newtheorem{theorem}{Theorem}[section]
\newtheorem{proposition}[theorem]{Proposition}
\newtheorem{lemma}[theorem]{Lemma}
\newtheorem{remark}{Remark}
\newenvironment{proof}{\medskip \noindent {\it Proof}.}{\hfill $\Box$\\ \medskip}
\newcommand{\V}{\operatorname{Var}}
\newcommand{\C}{\operatorname{Cov}}
\newcommand{\Vol}{\operatorname{Vol}}
\newcommand{\PV}{\operatorname{PV}}
\newcommand{\A}{\operatorname{A}}
\newcommand{\Lip}{{\operatorname{Lip}}}
\newcommand{\diam}{{\operatorname{diam}}}
\newcommand{\interior}{{\operatorname{int}}}
\newcommand{\proj}{{\operatorname{proj}}}
\def\R{{\mathbb{R}}}
\def\E{{\mathbb{E}}}
\def\N{\mathbb{N}}
\def\P{{\mathbb{P}}}
\def\1{\mbox{{\rm 1 \hskip-1.4ex I}}}
\begin{document}

\title{A Central Limit Theorem for the Poisson-Voronoi Approximation}
\date{}

\author{Matthias Schulte\footnote{University of Osnabrueck, Department of Mathematics, Albrechtstr. 28a, 49076 Osnabrueck, Germany, matthias.schulte[at]uni-osnabrueck.de}}

\maketitle

\begin{abstract}
For a compact convex set $K$ and a Poisson point process $\eta$, the union of all Voronoi cells with a nucleus in $K$ is the Poisson-Voronoi approximation of $K$. Lower and upper bounds for the variance and a central limit theorem for the volume of the Poisson-Voronoi approximation are shown. The proofs make use of so called Wiener-It\^o chaos expansions and the central limit theorem is based on a more abstract central limit theorem for Poisson functionals, which is also derived.
\end{abstract}
\begin{flushleft}
\textbf{Key words:} Central limit theorem, Poisson point process, Poisson-Voronoi approximation, random tessellation, set reconstruction, stochastic geometry, Wiener-It\^o chaos expansion\\
\textbf{MSC (2010):} Primary: 60D05, 60F05; Secondary: 60G55, 60H07
\end{flushleft}


\section{Introduction}

Let $K\subset\R^d, d\geq 2,$ be a compact convex set with interior points and let $\eta$ be a Poisson point process in $\R^d$ with intensity measure $\mu=\lambda\ell_d$ with $\lambda>0$ and the $d$-dimensional Lebesgue measure $\ell_d$. For every point $x\in\eta$ we define the Voronoi cell of $x$ by
$$V_{x}=\left\{z\in\R^d: ||x-z||\leq ||y-z|| \text{ for all } y\in\eta\right\}$$
and call $x$ the nucleus of $V_x$. We have $\interior(V_x)\cap \interior(V_y)=\emptyset$ for $x\neq y\in \eta$ and $\bigcup_{x\in\eta}V_x=\R^d$ such that the collection $\left(V_x\right)_{x\in\eta}$ of random polytopes constitutes a random tessellation of $\R^d$, the so called Poisson-Voronoi tessellation, which is one of the standard models in stochastic geometry, and we refer to \cite{SchneiderWeil2008} and the references therein for further details.

For our set $K$ we define the Poisson-Voronoi approximation $\A(K)$ as
$$\A(K)=\bigcup_{x\in\eta\cap K}V_x,$$
which is a random approximation of $K$. It is possible to interpret the Poisson-Voronoi approximation in the following way. One wants to reconstruct an unknown compact convex set $K$, but the only information available is a kind of oracle which tells for every point of a realization of the Poisson point process if it belongs to $K$. Now one approximates the unknown set $K$ by taking the union of the Voronoi cells with nuclei in $K$.
\begin{figure}
\center
\includegraphics[width=10cm]{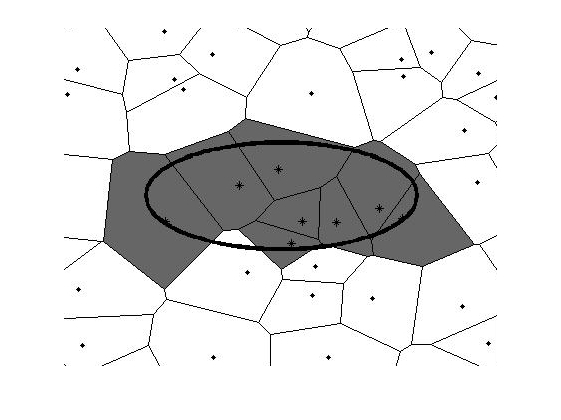}
\caption{Poisson-Voronoi approximation of an ellipse}
\end{figure}

In this paper, we are interested in the volume of the Poisson-Voronoi approximation
$$\PV(K)=\Vol(\A(K)).$$
A short computation yields $\E\PV(K)=\Vol(K)$, which means that $\PV(K)$ is an unbiased estimator for the volume of $K$. Under weaker assumptions than convexity on the approximated set $K$, it was shown in \cite{EinmahlKhmaladze2001,KhmaladzeToronjadze2001,Penrose2007} that $\PV(K)\rightarrow\Vol(K)$ and $\Vol(\A(K)\Delta K)\rightarrow 0$ as $\lambda\rightarrow\infty$, where $\A(K)\Delta K$ stands for the symmetric difference of $A(K)$ and $K$. In \cite{HevelingReitzner2009}, upper bounds for the asymptotic behavior of $\V \PV(K)$ and $\V \Vol(\A(K)\Delta K)$ and large deviation inequalities for $\PV(K)$ and $\Vol(\A(K)\Delta K)$ are derived for the same setting as in this paper. In \cite{Reitzneretal2011}, these results are extended to all non-centered moments and to a more general class of approximated sets, namely sets of finite perimeter. 

The main result of this paper is that $\PV(K)$ behaves asymptotically like a Gaussian random variable if the intensity of the Poisson point process goes to infinity.
\begin{theorem}\label{thm:CLT}
Let $N$ be a standard Gaussian random variable. Then 
$$\frac{PV(K)- \Vol(K)}{\sqrt{\V \PV(K)}}\rightarrow N\ \text{ in distribution as}\ \lambda\rightarrow\infty.$$
\end{theorem}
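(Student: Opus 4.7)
The plan is to view $\PV(K)$ as a square-integrable functional $F$ of the Poisson point process $\eta$ and apply the abstract central limit theorem for Poisson functionals that, as announced in the abstract, is established elsewhere in the paper via Wiener--It\^o chaos expansions and a Malliavin--Stein argument. Such a theorem provides an explicit upper bound on the distance between $(F-\E F)/\sqrt{\V F}$ and a standard Gaussian in terms of moments of the first- and second-order difference operators $D_xF = F(\eta\cup\{x\})-F(\eta)$ and $D^2_{x,y}F = D_y(D_xF)$. The task thus reduces to (a) identifying the differences geometrically, (b) estimating the resulting integrals in powers of $\lambda$, and (c) dividing by a matching lower bound for $\V \PV(K)$.

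First I would derive a geometric expression for $D_x\PV(K)$. A short case analysis shows that if $x\in K$, then adding $x$ to $\eta$ enlarges the approximation by the portion of the new cell $V_x(\eta\cup\{x\})$ that previously lay outside $\A(K)$; if $x\notin K$, then adding $x$ removes from $\A(K)$ the portion of $V_x(\eta\cup\{x\})$ that was previously inside. In either case $|D_x\PV(K)|\le \Vol(V_x(\eta\cup\{x\}))$, a quantity depending only on the local configuration of $\eta$ near $x$. This local dependence is the essential stabilization property and also implies that $D^2_{x,y}\PV(K)$ vanishes unless the cells of $x$ and $y$ in the doubly augmented process are neighbours, which forces $x$ and $y$ to be within distance of order $\lambda^{-1/d}$ of each other with high probability.

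Second I would insert these expressions into the abstract bound and estimate the resulting integrals. The key inputs are standard moment and tail bounds for Poisson--Voronoi cells (for instance $\E[\Vol(V_x(\eta\cup\{x\}))^p]$ is of order $\lambda^{-p}$, with exponentially decaying tails on the cell's diameter); a decorrelation estimate based on those tails, showing that $D^2_{x,y}F$ is effectively supported on pairs $(x,y)$ lying in the $\lambda^{-1/d}$-neighbourhood of $\partial K$; and a matching variance lower bound. After the rescaling $z\mapsto\lambda^{1/d}z$, the contributions localize near $\partial K$, and a careful scaling computation should show that each term of the abstract bound is of strictly smaller order than $\V\PV(K)^{2}$, so that the whole bound tends to zero.

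The main obstacle, in my view, is twofold. On the one hand, the variance lower bound is delicate: the literature \cite{HevelingReitzner2009,Reitzneretal2011} supplies upper bounds of order $\lambda^{-(d+1)/d}$, and a surface-type lower bound of the same order has to be proved in order that the normalising factor does not wash out the gains on the numerator of the Stein--Malliavin estimate; this typically requires placing many disjoint, locally independent configurations along $\partial K$ and summing their individual variance contributions. On the other hand, the second-order difference $D^2_{x,y}\PV(K)$ does not simply vanish when $x$ and $y$ are at finite distance; its influence must be controlled quantitatively through the tail of the stabilization radius of the Poisson--Voronoi tessellation, which in turn rests on the exponential decay of the probability that a typical cell is very large. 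Once these two geometric ingredients are in place, the remaining work is essentially scaling computations plus a direct application of the abstract central limit theorem.
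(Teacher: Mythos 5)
Your overall architecture (Malliavin--Stein bound for a Poisson functional, localization of the relevant configurations in a $\lambda^{-1/d}$-neighbourhood of $\partial K$, and a matching variance lower bound of order $\lambda^{-1-1/d}$) is in the right spirit, and you correctly single out the variance lower bound as an essential and nontrivial ingredient; the paper indeed proves it, via an explicit lower bound on $\|f_1\|_1^2$ over the boundary layer $M_\varepsilon$ using a rolling-ball lemma of Sch\"utt and Werner (Lemma \ref{lem:lowerf1}, Theorem \ref{thm:variance}). However, there is a genuine gap at the heart of your plan: the abstract tool you invoke --- a bound on the Wasserstein distance of $(F-\E F)/\sqrt{\V F}$ to $N$ expressed purely in terms of moments of the first- and second-order difference operators $D_xF$ and $D^2_{x,y}F$ --- is \emph{not} the theorem established in this paper, and you neither state it precisely nor prove it. The paper's abstract result (Theorem \ref{thm:abstractCLT}) is of a different nature: it truncates the Wiener--It\^o chaos expansion and requires (i) summable uniform bounds on $n!\|f_n\|_n^2/\V F$ for \emph{all} kernels $f_n$, and (ii) that the fourth-moment-type quantities $R_{ij}$ and $\tilde R_i$, controlled through the product formula for multiple integrals and partition integrals (Proposition \ref{prop:product}), are of smaller order than $\V F$. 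A second-order-Poincar\'e-type inequality of the kind you describe does exist in the later literature and can indeed be made to yield this CLT (even with rates, which the truncation argument here cannot give), but within the present paper it is unavailable, so your proof as written rests on an unproved, substantial external result that you incorrectly attribute to the paper itself.

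Consequently, the verification work you outline (moment and tail bounds for the cell volume $\Vol(V_x(\eta\cup\{x\}))$, stabilization-radius tails forcing $D^2_{x,y}\PV(K)$ to be negligible unless $x,y$ are close and near $\partial K$) does not connect to the conditions that actually have to be checked in the paper's framework. There one needs the explicit kernels $f_n(x_1,\hdots,x_n)=\frac{1}{n!}\E D_{x_1,\hdots,x_n}\PV(K)$ (Lemma \ref{lem:kernels}), pointwise bounds on $|f_n|$ in terms of the radius of the smallest enclosing ball and of the distance to $\partial K$, coarea-formula computations giving $n!\|f_n\|_n^2\leq c_n\,\lambda^{-1-\frac{1}{d}}$ with $\sum c_n<\infty$ uniformly in $\lambda$, and then bounds of order $\lambda^{-3-\frac1d}$ on the partition integrals controlling $R_{ij}$ and $\tilde R_i$; none of this is addressed by your sketch. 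In addition, your statement that each term of the abstract bound should be of smaller order than $\V\PV(K)^2$ glosses over the different normalizations ($(\V F)^{-1}$ versus $(\V F)^{-3/2}$) appearing in the Stein--Malliavin estimates; this would need to be made precise in whichever framework you adopt. In short: the route you propose is viable in principle but requires an abstract inequality that must itself be proved (or properly cited), whereas the paper's own proof proceeds through the chaos-kernel conditions of Theorem \ref{thm:abstractCLT}, and your proposal leaves exactly that verification --- the bulk of the argument --- untouched.
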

As it is pointed out in \cite{HevelingReitzner2009}, the Poisson-Voronoi approximation has applications in nonparametric statistics, image analysis and quantization problems. In this context, Theorem \ref{thm:CLT} can be helpful, since it allows to treat $\PV(K)$ as a Gaussian random variable if the intensity of the Poisson point process is sufficiently high.

For the proof of Theorem \ref{thm:CLT} we use so called Wiener-It\^o chaos expansions, which give us a representation
\begin{equation*}
\PV(K)=\Vol(K)+\sum_{n=1}^{\infty}I_n(f_n),
\end{equation*}
where the functions $f_n\in L^2((\R^d)^n),n\in\N,$ are known and $I_n(\cdot)$ denotes the $n$-th multiple Wiener-It\^o integral. The key argument of our proof of Theorem \ref{thm:CLT} is an abstract central limit theorem, which is derived in Section \ref{sec:abstractCLT} and could be helpful for other problems as well. This Theorem \ref{thm:abstractCLT} is based on a general result for the normal approximation of Poisson functionals (see Theorem 3.1 in \cite{Peccatietal2010}), which is used in a similar way as in \cite{ReitznerSchulte2011}.

In order to check the assumptions of this abstract central limit theorem, we have to prove some kind of uniform convergence for $n!||f_n||_n^2$. Combining this property with the identity
$$\V \PV(K) =\sum_{n=1}^\infty n!||f_n||_n^2,$$
we obtain as a byproduct:
\begin{theorem}\label{thm:variance}
Let $r_K$ be the inradius of $K$. Then, there are explicit constants $\underline{C},\overline{C}>0$ depending only on the dimension $d$ such that
\begin{equation}\label{eqn:variance}
\underline{C}\kappa_1V_{d-1}(K)\lambda^{-1-\frac{1}{d}}\leq \V \PV(K) \leq \overline{C}\sum_{i=0}^{d-1}\kappa_{d-i}V_{i}(K)\lambda^{-2+\frac{i}{d}}
\end{equation}
for $\lambda\geq (2/r_K)^d$, where $V_i(K), i=0,\hdots,n$, are the intrinsic volumes of $K$ and $\kappa_j$ stands for the volume of the unit ball in $\R^j$.
\end{theorem}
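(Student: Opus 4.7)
\emph{Proof sketch.} The plan is to use the chaos identity $\V \PV(K)=\sum_{n=1}^\infty n!\,\|f_n\|_n^2$ together with the Malliavin-type representation $n!\,f_n(x_1,\ldots,x_n)=\E D^n_{x_1,\ldots,x_n}\PV(K)$, where $D_yF(\eta)=F(\eta+\delta_y)-F(\eta)$ is the add-one-point difference operator. The geometric key is the elementary computation: inserting $y$ into $\eta$ creates a new Voronoi cell $V_y$ carved out of the existing cells, so that
\begin{equation*}
D_y\PV(K)=\begin{cases}+\Vol\bigl(V_y\setminus\A(K)\bigr), & y\in K,\\[2pt] -\Vol\bigl(V_y\cap\A(K)\bigr), & y\notin K,\end{cases}
\end{equation*}
with $\A(K)$ computed for the original $\eta$. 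In particular $|D_y\PV(K)|\leq\Vol(V_y)$, and $D_y\PV(K)=0$ as soon as $V_y$ together with all previous cells that it encroaches on lies on the same side of $\bd K$ as $y$. An inductive version of this picture shows that $D^n_{x_1,\ldots,x_n}\PV(K)$ vanishes unless $x_1,\ldots,x_n$ are geometrically ``linked'' and jointly influence a cell that straddles $\bd K$.

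For the upper bound I would estimate each $n!\,\|f_n\|_n^2$ separately. Jensen's inequality bounds the kernels pointwise by second moments of the new Voronoi cell, restricted to configurations in which every $x_i$ is relevant and some involved cell reaches $\bd K$. Standard exponential tail bounds for Poisson-Voronoi diameters, $\P(\diam V_y>t)\leq c_1 e^{-c_2\lambda t^d}$, localize the integration over $x_2,\ldots,x_n$ to a ball of radius of order $\lambda^{-1/d}$ around $x_1$, and the integration over $x_1$ to a tubular neighborhood of $\bd K$ of thickness of order $\lambda^{-1/d}$. The Steiner tube formula expresses the volume of such a tube as $\sum_{i=0}^{d-1}\kappa_{d-i}V_i(K)\lambda^{-(d-i)/d}$; combined with the $\lambda^{-2}$ coming from $\Vol(V_{x_1})^2$, and with summability in $n$ ensured by the Poisson moments of $\diam V_y$, this yields exactly the announced bound $\overline{C}\sum_{i=0}^{d-1}\kappa_{d-i}V_i(K)\lambda^{-2+i/d}$. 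The hypothesis $\lambda\geq(2/r_K)^d$ is used precisely to guarantee that the $\lambda^{-1/d}$-tube stays inside the reach of $K$ so that the Steiner expansion applies.

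For the lower bound I would retain only the first chaos term, using $\V \PV(K)\geq\|f_1\|_1^2=\lambda\int_{\R^d}(\E D_y\PV(K))^2\,dy$. For $y\in K$ at distance of order $\lambda^{-1/d}$ from $\bd K$, the cell $V_y$ spills across $\bd K$ with probability bounded below uniformly in $\lambda$, producing a $D_y\PV(K)$ of order $\lambda^{-1}$ with a fixed sign, whence $\E D_y\PV(K)\geq c\lambda^{-1}$ on such $y$. Integrating the square over a one-sided tube along $\bd K$ of thickness $c'\lambda^{-1/d}$ gives a contribution of order $\lambda\cdot V_{d-1}(K)\lambda^{-1/d}\cdot\lambda^{-2}=V_{d-1}(K)\lambda^{-1-1/d}$, matching the claimed lower bound up to the explicit dimensional constant $\underline{C}\kappa_1$.

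The main obstacle will be the uniform-in-$n$ control of $\|f_n\|_n^2$: one needs joint tail estimates for Poisson-Voronoi cells influenced by several added points, combined with the combinatorial structure of iterated differences, sharp enough to be summable in $n$ and still yield the same order as the $n=1$ contribution. Since the same type of quantitative control on $f_n$ is required to verify the hypotheses of the abstract central limit theorem of Section \ref{sec:abstractCLT}, the two proofs should proceed in tandem, as the excerpt indeed indicates.
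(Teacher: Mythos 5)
Your overall strategy coincides with the paper's: expand $\PV(K)$ in Wiener--It\^o chaos, bound each $n!\,\|f_n\|_n^2$ by localizing the kernels near $\bd K$ and using the Steiner formula, and get the lower bound from the first kernel alone. But the decisive technical content is exactly what your sketch leaves open and even flags as ``the main obstacle'': a pointwise bound on $f_n$ that is uniform in $n$. Saying that ``an inductive version of this picture shows that $D^n$ vanishes unless the points are linked'' is a placeholder, not an argument --- the iterated difference does not vanish, it is exponentially small, and proving this requires identifying the cancellation in the alternating sum (\ref{eqn:formulaD}). The paper does this in Lemma \ref{lem:kernels}: for each $y$ only the farthest point $\overline{x}(y)$ survives the telescoping, which yields $|f_n|\leq\frac{1}{(n-1)!\lambda}\exp(-\lambda\kappa_d r^d)$ with $r$ the circumradius of $x_1,\hdots,x_n$ (Lemma \ref{lem:Boundfn1}) and the companion bound in terms of the distance of the circumcenter to $\bd K$ (Lemma \ref{lem:Boundfn2}); the coarea formula applied to the circumball map $h_n$ plus the combinatorial estimate $C_n\lesssim\binom{n}{d+1}n$ (Lemma \ref{lem:boundCn}) then give $n!\,\|f_n\|_n^2\leq 2^{-n}\,\mathrm{poly}(n)\sum_{i}\kappa_{d-i}V_i(K)\lambda^{-2+i/d}$, which is summable. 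Your ``Jensen plus tail bounds for $\diam V_y$'' route could in principle deliver a bound of the same shape (a factor $C^n/n!$ is enough for summability), but only after you prove a quantitative version of the localization, i.e.\ an estimate of the type $|\E D^n_{x_1,\hdots,x_n}\PV(K)|\lesssim C^n\lambda^{-1}\exp(-c\lambda\,\diam(x_1,\hdots,x_n)^d)$ together with decay in the distance to $\bd K$; as written, this step is missing, so the upper bound is not established.

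Two further points. First, you attribute the hypothesis $\lambda\geq(2/r_K)^d$ to the upper bound (``so that the Steiner expansion applies inside the reach''); this is misplaced: for a convex body the outer Steiner formula holds for every parallel distance and the inner parallel volume is dominated by the outer one, so the paper's upper bound needs no restriction on $\lambda$. The inradius condition is needed only for the lower bound. Second, your lower bound is a mirror image of the paper's: you integrate $(\E D_y\PV(K))^2$ over an \emph{inner} tube $\{y\in K:\rho(y,\bd K)\leq c'\lambda^{-1/d}\}$ (using that, by convexity, a supporting hyperplane guarantees a cap of $K^C$ of volume $\gtrsim\lambda^{-1}$ near any boundary point), whereas the paper integrates over an \emph{outer} set $M_\varepsilon\subset K^C$ and invokes the rolling-ball Lemma 4 of Sch\"utt--Werner to ensure enough of $K$ is nearby. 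Your variant is viable, but the step you assert without proof --- that the inner tube has volume of order $V_{d-1}(K)\lambda^{-1/d}$ --- is precisely where the inradius condition must enter (e.g.\ via $K_{-\varepsilon}\supseteq(1-\varepsilon/r_K)(K-z)+z$ for the incenter $z$), playing the role that Sch\"utt--Werner plays in the paper; without such an argument the constant in front of $\kappa_1V_{d-1}(K)\lambda^{-1-1/d}$ is not justified.
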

It is well known that $V_d(K)=\Vol(K)$, $V_{d-1}(K)=\frac{1}{2}S(K)$, where $S(K)$ is the surface area of $K$, and $V_0(K)=1$. Both bounds in (\ref{eqn:variance}) are of order $\lambda^{-1-\frac{1}{d}}$ such that $\V \PV(K)$ also has order $\lambda^{-1-\frac{1}{d}}$.

The upper bound in (\ref{eqn:variance}) is also contained in \cite{HevelingReitzner2009}, where it is proven by a combination of the theory of valuations and the Poincar\'e inequality. The Poincar\'e inequality is also  connected to Wiener-It\^o chaos expansions (for more details we refer to \cite{LastPenrose2011}). The lower bound is new as far as we know.

Although the construction of the Poisson-Voronoi approximation does not depend on the convexity of $K$ and can also be done for more general classes of sets, we formulate our main results only for convex sets, in order to simplify the proofs. In the final Remark \ref{remarkConvexity}, we give two alternative assumptions for the approximated set that allow us to replace the convexity assumption. 

This paper is organized in the following way. In Section \ref{sec:preliminaries}, we introduce Wiener-It\^o chaos expansions and recall the coarea formula we use several times in our proofs. An abstract central limit theorem for Poisson functionals and a helpful proposition to check one of the conditions are derived in Section \ref{sec:abstractCLT}.  In Section \ref{sec:variance}, the Wiener-It\^o chaos expansion for the volume of the Poisson-Voronoi approximation is computed and used to establish Theorem \ref{thm:variance}, before the proof of Theorem \ref{thm:CLT} is concluded in Section \ref{sec:CLTPV}.

\section{Preliminaries}\label{sec:preliminaries}

\paragraph{Wiener-It\^o chaos expansions.} Our main tool in this paper are so called Wiener-It\^o chaos expansions, which are briefly introduced in the following, and we refer to \cite{LastPenrose2011,Peccatietal2010,PeccatiTaqqu2010} for more details. For a Poisson functional $F=F(\eta)$ depending on a Poisson point process $\eta$ over a Borel space $(X,\mathcal{X},\mu)$ with a $\sigma$-finite non-atomic intensity measure $\mu$ (for $\PV(K)$: $X=\R^d$, ${\cal X}$ is the standard Borel $\sigma$-field in $\R^d$ and $\mu=\lambda\ell_d$) one defines the difference operator as
$$D_xF=F(\eta+\delta_x)-F(\eta)$$
for $x\in X$, where $\delta_x$ is the Dirac measure concentrated at the point $x$. The difference operator has the geometric interpretation that it measures the effect of adding the point $x$ to the Poisson point process $\eta$. Therefore, it is sometimes called add-one-cost operator. The iterated difference operator is given recursively by
\begin{equation}\label{eqn:definitionD}
D_{x_1,\hdots,x_n}F=D_{x_1}D_{x_2,\hdots,x_n}F
\end{equation}
and is symmetric under all permutations of $x_1,\hdots,x_n$. The definition in (\ref{eqn:definitionD}) is equivalent to
\begin{equation}\label{eqn:formulaD}
D_{x_1,\hdots,x_n}F=\sum_{I\subset \{1,\hdots,n\}}(-1)^{n+|I|}F(\eta+\sum_{i\in I}\delta_{x_i}).
\end{equation}
Denoting the $n$-th multiple Wiener-It\^o integral with respect to the compensated Poisson point process $\eta-\mu$ by $I_n(\cdot)$ and defining functions $f_n: X^n\rightarrow\overline{\R}:=\R\cup\left\{\pm\infty\right\}$ as
\begin{equation}\label{eqn:definitionfn}
f_n(x_1,\hdots,x_n)=\frac{1}{n!}\E D_{x_1,\hdots,x_n}F,
\end{equation}
we have the following representation for square integrable Poisson functionals $F$ (see Theorem 1.1 and Theorem 1.3 in \cite{LastPenrose2011}):

\begin{theorem}\label{thm:chaos}
Let $F\in L^2(\P)$. Then $f_n\in L^2(X^n)$ for $n\in\N$ and
\begin{equation}\label{eqn:chaosexpansion}
F=\E F+\sum_{n=1}^{\infty} I_n(f_n).
\end{equation}
Moreover,
\begin{equation}\label{eqn:chaosvariance}
\V F=\sum_{n=1}^{\infty} n!||f_n||_n^2,
\end{equation}
where $||\cdot||_n$ stands for the usual norm in $L^2(X^n)$.
\end{theorem}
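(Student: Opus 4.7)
The plan is to prove the theorem in two stages: first establish the abstract Wiener chaos decomposition of $L^2(\mathbb{P})$, then identify the chaos kernels with the explicit functions $f_n$ from \eqref{eqn:definitionfn}. For the first stage, I would start from the isometry $\E[I_n(g)I_m(h)]=\delta_{nm}\,n!\,\langle\widetilde{g},\widetilde{h}\rangle_{L^2(X^n)}$ (with $\widetilde{\cdot}$ denoting symmetrization), which follows directly from the iterated stochastic integral construction of $I_n$ against the compensated measure $\eta-\mu$. This makes $\bigoplus_n\mathcal{H}_n$ a closed orthogonal subspace of $L^2(\mathbb{P})$, where $\mathcal{H}_n$ is the range of $I_n$ on symmetric square-integrable kernels. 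Completeness $\bigoplus_n\mathcal{H}_n=L^2(\mathbb{P})$ can then be obtained from the density in $L^2(\mathbb{P})$ of stochastic exponentials $\mathcal{E}(g)=\exp\bigl(-\int g\,d\mu\bigr)\prod_{x\in\eta}(1+g(x))$ (for bounded $g$ of $\mu$-integrable support) combined with the explicit expansion $\mathcal{E}(g)=1+\sum_{n\geq 1}\tfrac{1}{n!}I_n(g^{\otimes n})$; this produces the representation $F=\E F+\sum_{n\geq 1}I_n(g_n)$ with unique symmetric kernels $g_n\in L^2(X^n)$ and, via the isometry, the variance identity \eqref{eqn:chaosvariance}.

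For the second stage, I would identify $g_n$ with $f_n$ using the action of the difference operator on multiple integrals. The key computation is $D_xI_k(h)=k\,I_{k-1}(h(x,\cdot))$ for symmetric $h$, which follows from \eqref{eqn:formulaD} together with the product formula for multiple Poisson integrals. Iterating gives $D_{x_1,\ldots,x_n}I_k(h)=\tfrac{k!}{(k-n)!}I_{k-n}(h(x_1,\ldots,x_n,\cdot))$ for $k\geq n$ and zero for $k<n$. Taking expectations annihilates every multiple integral of positive order, so if one could apply $\E D_{x_1,\ldots,x_n}$ termwise to the chaos expansion of $F$, only the $k=n$ contribution would survive, giving $\E D_{x_1,\ldots,x_n}F=n!\,g_n(x_1,\ldots,x_n)$ and hence $g_n=f_n$ (which is symmetric by \eqref{eqn:definitionD}) with $f_n\in L^2(X^n)$.

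The main obstacle is justifying that termwise manipulation rigorously: the series $\sum_n I_n(g_n)$ only converges in $L^2(\mathbb{P})$, and the iterated difference operator is unbounded on $L^2(\mathbb{P})$, so pulling $\E D_{x_1,\ldots,x_n}$ inside an infinite sum is not automatic. I would handle this via truncation, working first with the finite chaos $F_N=\E F+\sum_{k\leq N}I_k(g_k)$ where the identity $\E D_{x_1,\ldots,x_n}F_N=n!\,g_n$ holds trivially, and then passing to the limit $N\to\infty$ by controlling $\|\E D_{x_1,\ldots,x_n}(F-F_N)\|_{L^2(\mu^n)}$ through the multivariate Mecke formula applied to the representation \eqref{eqn:formulaD}. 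The same Mecke computation simultaneously supplies the a priori $L^2(X^n,\mu^n)$ bound on $\E D_{x_1,\ldots,x_n}F$ needed to declare $f_n\in L^2(X^n)$ in the first place.
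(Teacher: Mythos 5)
The paper does not actually prove Theorem \ref{thm:chaos}: it is quoted verbatim from Theorems 1.1 and 1.3 of \cite{LastPenrose2011}, so any proof you give is necessarily ``a different route'' from the paper. Your route is the classical Wiener--It\^o scheme: orthogonality/isometry of the multiple integrals, completeness of the chaos obtained from the density of the stochastic exponentials $\mathcal{E}(g)$ together with $\mathcal{E}(g)=1+\sum_{n\geq 1}\tfrac{1}{n!}I_n(g^{\otimes n})$, and then identification of the abstract kernels $g_n$ with the explicit $f_n$ of \eqref{eqn:definitionfn} via $D_xI_k(h)=k\,I_{k-1}(h(x,\cdot))$. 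Last and Penrose argue differently: they prove the isometry $\E FG=\E F\,\E G+\sum_{n\geq1}\tfrac{1}{n!}\langle \E D^nF,\E D^nG\rangle_n$ directly, first for a dense class of bounded functionals determined by the restriction of $\eta$ to sets of finite measure, by an explicit computation with the multivariate Mecke equation, and then extend by $L^2$-continuity; completeness is a by-product of that isometry rather than an input via exponential vectors. Your version buys the familiar Hilbert-space picture (as in \cite{PeccatiTaqqu2010}), but the density of exponential vectors, which you only assert, is exactly where completeness is hidden and needs at least a precise citation or a short separate argument.

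One step of your identification stage needs repair as written. For every $N\geq n$ one has $\E D_{x_1,\hdots,x_n}F_N=n!\,g_n(x_1,\hdots,x_n)$, so the quantity $\E D_{x_1,\hdots,x_n}(F-F_N)$ does not depend on $N$ at all; it equals $\E D_{x_1,\hdots,x_n}F-n!\,g_n$, the very object you want to show vanishes. The argument must therefore be phrased as a continuity estimate: for $A\in\mathcal{X}$ with $\mu(A)<\infty$, the multivariate Mecke formula applied to each term of \eqref{eqn:formulaD} together with Cauchy--Schwarz gives $\int_{A^n}\E\bigl|G(\eta+\delta_{x_1}+\cdots+\delta_{x_n})\bigr|\,\mu^n(d(x_1,\hdots,x_n))\leq \bigl(\E G^2\bigr)^{1/2}\bigl(\E[\eta^{(n)}(A^n)^2]\bigr)^{1/2}$, where $\eta^{(n)}$ denotes the $n$-th factorial measure, so $G\mapsto \E D_{x_1,\hdots,x_n}G$ is continuous from $L^2(\P)$ into $L^1(A^n,\mu^n)$, \emph{not} into $L^2(\mu^n)$. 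Applying this to $G=F-F_N$ shows that the $N$-independent difference has $L^1(A^n,\mu^n)$-norm zero, hence $\E D_{x_1,\hdots,x_n}F=n!\,g_n$ for $\mu^n$-a.e.\ $(x_1,\hdots,x_n)$, and $f_n=g_n\in L^2(X^n)$ follows because $g_n\in L^2(X^n)$ from your first stage. Correspondingly, your closing claim that the Mecke computation ``supplies the a priori $L^2(X^n,\mu^n)$ bound on $\E D_{x_1,\hdots,x_n}F$'' is not right (that would require control of $\E[F^2\eta^{(n)}(A^n)]$, which $F\in L^2(\P)$ does not give); the square-integrability of $f_n$ comes only after the identification with $g_n$. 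With these adjustments your plan goes through.
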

We call the identity (\ref{eqn:chaosexpansion}) the Wiener-It\^o chaos expansion of $F$ and the functions $f_n$ the kernels of the Wiener-It\^o chaos expansion of $F$. Note that (\ref{eqn:chaosexpansion}), which is sometimes denoted as Fock space representation, is an orthogonal series since 
$$\E I_n(f)I_m(g)=\begin{cases}n! \int_{X^n}fg\ d\mu^{n}, &n=m\\ 0, &n\neq m \end{cases}.$$
Analogously to (\ref{eqn:variance}) the covariance of two Poisson functionals $F,G\in L^2(\P)$ with $F=\E F+\sum_{n=1}^\infty I_n(f_n)$ and $G=\E G+\sum_{n=1}^\infty I_n(g_n)$ is given by
\begin{equation}\label{eqn:covariance}
\C(F,G)=\sum_{n=1}^\infty n!\langle f_n,g_n\rangle_n,
\end{equation}
where $\langle\cdot,\cdot\rangle_n$ is the usual inner product in $L^2(X^n)$.

In the next section, we need the inverse Ornstein-Uhlenbeck generator $L^{-1}$ which is for centred random variables with a chaos expansion (\ref{eqn:chaosexpansion}) given by
\begin{equation}\label{eqn:OrnsteinUhlenbeck}
L^{-1}F=-\sum_{n=1}^\infty \frac{1}{n} I_n(f_n).
\end{equation}
In this context, it is also possible to define the difference operator as $$D_xF=\sum_{n=1}^\infty n I_{n-1}(f_n(x,\cdot))$$ if $\sum_{n=1}^\infty nn!\|f_n\|_n^2<\infty$.

\paragraph{Coarea formula.} Our main tool for the computation of integrals where the kernels of the chaos expansion of $F$ arise, e.g. $\|f_n\|_n^2$, is the so called coarea formula. By ${\cal H}^m$, we denote the $m$-dimensional Hausdorff measure.  If $f: \R^m\rightarrow\R^n$ is differentiable in $x\in \R^m$, we define the Jacobian $J f(x)$ by
$$J f(x)=\sqrt{\det\left(f'(x)\ f'(x)^T\right)},$$
where $f'$ stands for the Jacobi matrix of $f$. Note that a Lipschitz function is almost everywhere differentiable such that its Jacobian is almost everywhere defined. Using this notation, we have (see Corollary 5.2.6 in \cite{KrantzParks2008}, for example):
\begin{theorem}\label{thm:coarea}
If $f:\R^m\rightarrow\R^n$ is a Lipschitz function and $m\geq n$, then
\begin{equation*}
\int_B g(x) Jf(x) \ell_m(dx)=\int_{\R^n}\int_{B\cap f^{-1}(y)}g(z) {\cal H}^{m-n}(dz)\ell_n(dy)
\end{equation*}
holds for each Lebesgue measurable $B\subset \R^m$ and each nonnegative $\ell_m$-measurable function $g: B\rightarrow\R$.
\end{theorem}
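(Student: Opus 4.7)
The plan is to establish the coarea formula by an ascending sequence of cases: linear surjections, then $C^1$ maps with nonvanishing Jacobian, then general $C^1$ maps, and finally Lipschitz maps; the extension from indicator functions to arbitrary nonnegative measurable $g$ is then a standard monotone-class argument. For a linear surjection $L: \R^m \to \R^n$ I would invoke the singular value decomposition $L = U \Sigma V^T$ with $U \in O(n)$ and $V \in O(m)$: after these orthogonal changes of coordinates $L$ becomes a scaled coordinate projection whose Jacobian equals the product of its nonzero singular values, and the claimed identity reduces to Fubini's theorem. For a $C^1$ map $f$ with $Jf(x_0) > 0$ the implicit function theorem locally straightens $f$ into such a projection up to a diffeomorphism, so the linear case transfers by a change of variables on a neighbourhood of $x_0$, and a countable cover of $\{Jf > 0\}$ produces the formula on that set.

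The main difficulty is the critical set $C = \{Jf = 0\}$: the left-hand side vanishes on $C$, so the task is to show that $\cH^{m-n}(C \cap f^{-1}(y)) = 0$ for $\ell_n$-a.e.\ $y$. The strategy is a quantitative covering argument of Sard type. Fix $\varepsilon > 0$ and cover $C$ by small cubes $Q_i$ on which $f'$ is approximated by a linear map $L_i$ of rank at most $n-1$ up to error $\varepsilon$, so that $f(Q_i)$ sits in an $\varepsilon\,\diam(Q_i)$-tube around a lower-dimensional affine image and, for each fixed $y$, the preimage $Q_i \cap f^{-1}(y)$ lies in a thin slab of controlled thickness. Summing the resulting $\cH^{m-n}$ bounds over $i$, integrating in $y$, and letting $\varepsilon \to 0$ yields the desired null estimate.

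For a general Lipschitz $f$, Rademacher's theorem guarantees $\ell_m$-a.e.\ differentiability, and a Lusin-type approximation produces a decomposition $B = N \cup \bigcup_k B_k$ with $\ell_m(N) = 0$ and each $f|_{B_k}$ the restriction of a $C^1$ map; the $C^1$ case applied on each $B_k$ handles the bulk, and the same covering argument as above treats $N$. Monotone convergence then extends the identity from $g = \1_A$ to all nonnegative $\ell_m$-measurable $g$. The hard part will be the Sard-type null estimate for the critical set and, closely related, the Lipschitz-to-$C^1$ reduction: both need careful geometric covering bounds because the derivative is defined only almost everywhere and has no continuity on the critical set, whereas the linear case, the implicit-function-theorem localization, and the monotone-class extension in $g$ are comparatively routine.
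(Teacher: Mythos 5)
You should note at the outset that the paper does not prove Theorem \ref{thm:coarea} at all: it is the classical coarea formula, quoted from Corollary 5.2.6 of Krantz and Parks, so there is no internal argument to compare yours against. Your outline reproduces the standard geometric-measure-theory proof (linear surjections via SVD and Fubini, regular $C^1$ points via the implicit function theorem, a Sard-type estimate on the critical set, Rademacher plus a Whitney--Lusin $C^1$ approximation for Lipschitz $f$, monotone convergence in $g$), which is the right roadmap and essentially the one in the cited literature. But the steps you explicitly defer are exactly where the theorem lives, and your heuristic for the hardest one is wrong as stated, so what you have is a plan rather than a proof.

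Concretely, on the critical set the slice $Q_i\cap f^{-1}(y)$ does \emph{not} lie in a thin slab of $Q_i$: if $f$ is nearly constant on $Q_i$ (rank of $L_i$ may be anything up to $n-1$, including $0$), the slice can be essentially all of $Q_i$, and its $\cH^{m-n}$-measure can even be infinite (take $f$ constant on $Q_i$). The correct mechanism is different: for every $y$ one only bounds the pre-measure $\cH^{m-n}_{\diam Q_i}\bigl(Q_i\cap f^{-1}(y)\bigr)\le C\,\diam(Q_i)^{m-n}$ (cover the slice by $Q_i$ itself), observes that the slice is empty unless $y\in f(Q_i)$, whose $\ell_n$-measure is at most $C\varepsilon\,\diam(Q_i)^{n}$ by the rank deficiency, and integrates in $y$ \emph{before} sending the covering scale to $0$ (Fatou), then lets $\varepsilon\to0$; only this order of limits yields $\int \cH^{m-n}(C\cap f^{-1}(y))\,\ell_n(dy)=0$. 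Moreover, for merely Lipschitz $f$ you cannot cover $C$ by cubes on which $f'$ is uniformly $\varepsilon$-close to a linear map ($f'$ exists only a.e.\ and has no continuity), so this argument is only available after the $C^1$ reduction, and the leftover $\ell_m$-null set $N$ is handled not by the same Sard argument but by an Eilenberg-type inequality $\int^{*}\cH^{m-n}(A\cap f^{-1}(y))\,\ell_n(dy)\le C(m,n)(\Lip f)^{n}\,\ell_m(A)$, the key covering lemma of the whole proof, which you never state. Two further ingredients you treat as black boxes are themselves substantial: the $\ell_n$-measurability of $y\mapsto\cH^{m-n}(B\cap f^{-1}(y))$ for an arbitrary measurable $B$ (part of the content of the theorem), and the Whitney-extension-based Lusin approximation of Lipschitz maps by $C^1$ maps. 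Until these are supplied, the proposal is an accurate outline of the known proof, which is an acceptable status given that the paper itself only cites the result, but it is not yet a proof.
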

For $n=1$, we have $Jf(x)=||\nabla f(x)||$. Note that $||\cdot||$ stands for the usual Euclidean norm, whereas $||\cdot||_n$ is the norm in $L^2(X^n)$ or $L^2((\R^d)^n)$.

\section{An abstract central limit theorem}\label{sec:abstractCLT}

In this section, we prove an abstract central limit theorem for a more general setting, which is used to show Theorem \ref{thm:CLT}. As in the previous section, we assume that $\eta$ is a Poisson point process over a Borel space $(X, {\cal{X}}, \mu)$ with a $\sigma$-finite non-atomic intensity measure $\mu$ and $F\in L^2(\P)$ is a Poisson functional with a Wiener-It\^o chaos expansion
$$F=\E F+\sum_{n=1}^{\infty}I_n(f_n).$$
We are interested in the Wasserstein distance between our Poisson functional and a Gaussian random variable, which is very helpful for establishing central limit theorems since convergence in Wasserstein distance implies convergence in distribution. The Wasserstein distance $d_W$ of two random variables $Y$ and $Z$ is given by
\begin{equation}\label{eqn:defWasserstein}
d_W(Y,Z)=\sup\limits_{h\in\Lip(1)}|\E h(Y)-\E h(Z)|,
\end{equation}
where $\Lip(1)$ is the set of all functions $h:\R\rightarrow\R$ with a Lipschitz constant less or equal than one. In this setting, we can state the following:

\begin{theorem}\label{thm:abstractCLT}
Let $F\in L^2(\P)$ and let $N$ be a standard Gaussian random variable. 
\begin{itemize}
 \item [a)] For every $k\in\mathbb{N}$ one has
\begin{equation}\label{eqn:inequalitydW}
\!\!\!\!\!\!\!\!\!\!\!\!\!\!\!\!\!\!\!\! d_W\left(\frac{F-\E F}{\sqrt{\V F}},N\right)\leq 2\frac{\sqrt{\sum_{n=k+1}^{\infty} n!||f_n||_n^2}}{\sqrt{\V F}}+k\sum_{1\leq i,j\leq k}\frac{\sqrt{R_{ij}}}{\V F}+k^{\frac{7}{2}}\sum_{i=1}^k\frac{\sqrt{\tilde{R_i}}}{\V F}
\end{equation}
with 
\begin{eqnarray*}
R_{ij}&=& \E\langle I_{i-1}(f_i(z,\cdot)),I_{j-1}(f_j(z,\cdot))\rangle^2-\left(\E\langle I_{i-1}(f_i(z,\cdot)),I_{j-1}(f_j(z,\cdot))\rangle\right)^2\\
\tilde{R}_i&=&\E\langle I_{i-1}(f_i(z,\cdot))^2,I_{i-1}(f_i(z,\cdot))^2\rangle
\end{eqnarray*}
for $i,j=1,\hdots,k$. Here $\langle\cdot,\cdot\rangle$ stands for the usual inner product in $L^2(X)$.
\item [b)] Let $F$ depend on a parameter $\lambda>0$. If there are constants $c_n\in\R$ such that 
\begin{equation}\label{eqn:conditionkernels}
\frac{n! ||f_n||_n^2}{\V F}\leq c_n\ \text{ for all }\ \lambda>\lambda_0\in\R\ \text{ and } \sum c_n<\infty
\end{equation}
and
\begin{equation}\label{eqn:conditionRs}
\frac{\sqrt{R_{ij}}}{\V F},\frac{\sqrt{\tilde{R}_i}}{\V F}\rightarrow 0\ \text{ as } \lambda\rightarrow\infty
\end{equation} 
for all $i,j\in \mathbb{N}$, then
$$d_W\left(\frac{F-\E F}{\sqrt{\V F}},N\right)\rightarrow 0\ \text{as}\ \lambda\rightarrow\infty.$$
\end{itemize}
\end{theorem}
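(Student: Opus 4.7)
The plan is to reduce to the general normal approximation bound for Poisson functionals due to Peccati, Sol\'e, Taqqu and Utzet (Theorem~3.1 in \cite{Peccatietal2010}): for any centred $G\in L^2(\P)$ with $\V G=\sigma^2$,
$$d_W(G/\sigma,N)\leq \frac{1}{\sigma^2}\sqrt{\V\langle DG,-DL^{-1}G\rangle}+\frac{1}{\sigma^3}\int_X\E\bigl(|D_zG|^2\,|D_zL^{-1}G|\bigr)\,\mu(dz).$$
Using the chaos identities $D_zG=\sum_{n\geq 1}nI_{n-1}(f_n(z,\cdot))$ and $-D_zL^{-1}G=\sum_{n\geq 1}I_{n-1}(f_n(z,\cdot))$, both summands become series indexed by chaos orders. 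The idea is to truncate at level $k$ so that pairs with $\max(i,j)\le k$ produce the $R_{ij}$ and $\tilde R_i$ expressions, while the remainder is controlled by the tail $\sum_{n>k}n!\|f_n\|_n^2$.

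First I would write $F-\E F=F_k+R_k$ with $F_k=\sum_{n=1}^k I_n(f_n)$, $R_k=\sum_{n>k}I_n(f_n)$, and set $\sigma^2=\V F$, $\sigma_k^2=\V F_k$. The Lipschitz characterisation of $d_W$ combined with Jensen's inequality gives $d_W((F-\E F)/\sigma,\,F_k/\sigma)\leq \sqrt{\V R_k}/\sigma$, and since $\sigma-\sigma_k=\V R_k/(\sigma+\sigma_k)$ one also obtains $d_W(F_k/\sigma,\,F_k/\sigma_k)\leq (\sigma-\sigma_k)/\sigma\leq \sqrt{\V R_k}/\sigma$. By the triangle inequality these together produce the factor $2$ in front of the first summand of (\ref{eqn:inequalitydW}); it then suffices to estimate $d_W(F_k/\sigma_k,N)$ via the Peccati et al.\ inequality applied to $G=F_k$, since swapping $\sigma_k$ for $\sigma$ in the denominators introduces only further truncation-type errors already accounted for.

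Plugging $D_zF_k=\sum_{i=1}^k iI_{i-1}(f_i(z,\cdot))$ and $-D_zL^{-1}F_k=\sum_{j=1}^k I_{j-1}(f_j(z,\cdot))$ into the first Peccati term and using $\E\langle DF_k,-DL^{-1}F_k\rangle=\sum_{i=1}^k i!\|f_i\|_i^2=\sigma_k^2$, one finds
$$\sigma_k^2-\langle DF_k,-DL^{-1}F_k\rangle=\sum_{1\leq i,j\leq k}i\Bigl(\E\langle I_{i-1}(f_i(z,\cdot)),I_{j-1}(f_j(z,\cdot))\rangle-\langle I_{i-1}(f_i(z,\cdot)),I_{j-1}(f_j(z,\cdot))\rangle\Bigr),$$
and taking the $L^2(\P)$-norm via the triangle inequality together with the crude bound $i\le k$ produces the $k\sum_{i,j\le k}\sqrt{R_{ij}}$ term. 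For the second Peccati summand, Cauchy-Schwarz pointwise in $z$ gives $\E|D_zF_k|^2|D_zL^{-1}F_k|\leq\sqrt{\E|D_zF_k|^4}\sqrt{\E|D_zL^{-1}F_k|^2}$; expanding the fourth power yields $O(k^4)$ products of four multiple integrals, which via the multiplication formula for Wiener-It\^o integrals and the isometry $\E I_n(g)^2=n!\|g\|^2$ reduce, after a further Cauchy-Schwarz in $z$, to quantities bounded by $\sqrt{\tilde R_i}$. A careful count of the resulting $k$-powers coming from the index ranges and the bounds $i,j\le k$ yields the announced $k^{7/2}\sum_i\sqrt{\tilde R_i}$ term.

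Part~b) follows by a standard diagonal argument. Condition (\ref{eqn:conditionkernels}) dominates the tail $\sum_{n>k}n!\|f_n\|_n^2/\V F$ by the convergent series $\sum_{n>k}c_n$, which is small uniformly in $\lambda>\lambda_0$. Given $\varepsilon>0$, I fix $k$ so large that the first summand of (\ref{eqn:inequalitydW}) is $<\varepsilon/2$ for all $\lambda>\lambda_0$. For this fixed $k$, the two remaining sums are \emph{finite} and, by (\ref{eqn:conditionRs}), each $\sqrt{R_{ij}}/\V F$ and $\sqrt{\tilde R_i}/\V F$ tends to $0$ as $\lambda\to\infty$, so both drop below $\varepsilon/2$ eventually. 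The main technical obstacle is the precise combinatorial accounting behind the constants $k$ and especially $k^{7/2}$: this requires a clean expansion of the fourth moment via the multiplication formula and disciplined tracking of the contraction indices and $k$-powers that appear.
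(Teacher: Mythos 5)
Your overall strategy---truncate at level $k$, control the distance between $F$ and $F_k=\sum_{n=1}^kI_n(f_n)$ by the tail $\sum_{n>k}n!\|f_n\|_n^2$, and apply Theorem 3.1 of \cite{Peccatietal2010} to the truncation, reading off the $R_{ij}$ from the variance of $\langle DF_k,-DL^{-1}F_k\rangle$ and the $\tilde R_i$ from the fourth-moment term---is the same as the paper's. However, there is a genuine gap at the renormalization step. You standardize $F_k$ by its own standard deviation $\sigma_k$ and assert that ``swapping $\sigma_k$ for $\sigma$ in the denominators introduces only further truncation-type errors already accounted for.'' This is not true in general: the $\sigma$-form of the Peccati--Sol\'e--Taqqu--Utzet bound applied to $G=F_k$ yields the $R$-terms divided by $\sigma_k^2$ (resp.\ $\sigma_k^3$), and replacing $\sigma_k^{-2}$ by $\sigma^{-2}$ costs a multiplicative factor $\V F/\V F_k$, which can be arbitrarily large (indeed $\V F_k$ may even vanish, in which case $F_k/\sigma_k$ is undefined); this loss is \emph{not} absorbed by the tail term $2\sqrt{\sum_{n>k}n!\|f_n\|_n^2}/\sqrt{\V F}$. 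Hence your argument does not deliver the explicit inequality (\ref{eqn:inequalitydW}) of part a). The paper sidesteps this by never standardizing $F_k$: it applies Theorem 3.1 of \cite{Peccatietal2010} to $F_k/\sqrt{\V F}$, so the term $\E\bigl|1-\tfrac{1}{\V F}\int_X D_zF_k\,(-D_zL^{-1}F_k)\,\mu(dz)\bigr|$ automatically produces the variance deficit $\sum_{n>k}n!\|f_n\|_n^2/\V F\le\sqrt{\sum_{n>k}n!\|f_n\|_n^2/\V F}$ (this, combined with the bound $d_W(F/\sqrt{\V F},F_k/\sqrt{\V F})\le\sqrt{\sum_{n>k}n!\|f_n\|_n^2/\V F}$, is where the factor $2$ comes from), while all $R$-terms keep $\V F$ in the denominator throughout.

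Two further remarks. For part b) your route is repairable: under (\ref{eqn:conditionkernels}) one has $\V F_k\ge\bigl(1-\sum_{n>k}c_n\bigr)\V F$ for $k$ large and $\lambda>\lambda_0$, so the $\sigma_k$-denominators become comparable to $\V F$; but this needs to be said explicitly, and it only rescues the limit statement, not part a) as stated. Finally, the combinatorial accounting behind the factors $k$ and $k^{7/2}$, which you flag as the main obstacle, is not carried out in the paper either---it is quoted from the proof of Theorem 4.1 in \cite{ReitznerSchulte2011}; your sketch (triangle inequality with $i\le k$ for the first term, Cauchy--Schwarz in $z$ and a crude bound on the sum of $k$ chaoses with coefficients $i\le k$ for the fourth-moment term) is the right idea for reproving it, but as written it is a plan rather than a proof.
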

Since $F$ is standardized in Theorem \ref{thm:abstractCLT}, we can assume without loss of generality that $\E F=0$. The idea of the proof of (\ref{eqn:inequalitydW}) is to define truncated Poisson functionals $F_k$, $k\in\mathbb{N}$, by
$$F_k=\sum_{n=1}^k I_n(f_n)$$
and to use the triangle inequality
\begin{equation}\label{eqn:bounddW}
d_W\left(\frac{F}{\sqrt{\V F}}, N\right)\leq d_W\left(\frac{F}{\sqrt{\V F}},\frac{F_k}{\sqrt{\V F}}\right)+d_W\left(\frac{F_k}{\sqrt{\V F}},N\right).
\end{equation}
Now we compute upper bounds for both expressions on the right hand side in (\ref{eqn:bounddW}).

\begin{lemma}\label{lem:dW1}
It holds
\begin{equation}\label{eqn:dW1}
d_W\left(\frac{F}{\sqrt{\V F}},\frac{F_k}{\sqrt{\V F}}\right)\leq \sqrt{\frac{\sum_{n=k+1}^{\infty}n!||f_n||_n^2}{\V F}}
\end{equation}
for every $k\in\mathbb{N}$.
\end{lemma}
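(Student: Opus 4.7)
The plan is to bound the Wasserstein distance by an $L^2$ norm and then apply the isometry property of Wiener-It\^o integrals stated in Theorem \ref{thm:chaos}.

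First I would unpack the definition (\ref{eqn:defWasserstein}). For any $h\in\Lip(1)$, the Lipschitz property gives
$$\left|h\!\left(\tfrac{F}{\sqrt{\V F}}\right) - h\!\left(\tfrac{F_k}{\sqrt{\V F}}\right)\right| \leq \frac{|F-F_k|}{\sqrt{\V F}}.$$
Taking expectation and then the supremum over $h\in\Lip(1)$ yields
$$d_W\!\left(\frac{F}{\sqrt{\V F}},\frac{F_k}{\sqrt{\V F}}\right) \leq \frac{\E|F-F_k|}{\sqrt{\V F}} \leq \frac{\sqrt{\E(F-F_k)^2}}{\sqrt{\V F}},$$
where the last step is the Cauchy--Schwarz (or Jensen) inequality.

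Next I would identify $F-F_k$ using the chaos expansion. Since $F=\E F+\sum_{n=1}^\infty I_n(f_n)$ and $F_k=\sum_{n=1}^k I_n(f_n)$, and since we may assume $\E F=0$ by the standing reduction mentioned just after the statement of Theorem \ref{thm:abstractCLT}, we have $F-F_k=\sum_{n=k+1}^\infty I_n(f_n)$. The orthogonality relation recalled after Theorem \ref{thm:chaos}, namely $\E I_n(f)I_m(g)=0$ for $n\neq m$ and $\E I_n(f)^2=n!\|f\|_n^2$, then gives
$$\E(F-F_k)^2 = \sum_{n=k+1}^\infty n!\,\|f_n\|_n^2.$$
Substituting this into the previous display produces exactly (\ref{eqn:dW1}).

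There is no real obstacle: the argument is a direct combination of the Lipschitz bound, Cauchy--Schwarz, and the Fock space isometry. The only mild care needed is to invoke $\E F=0$ so that the chaos expansion of $F-F_k$ genuinely starts at order $k+1$ with no constant term contaminating the $L^2$ computation.
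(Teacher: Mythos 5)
Your proposal is correct and follows essentially the same route as the paper: the Lipschitz bound on test functions, Cauchy--Schwarz to pass to $\sqrt{\E(F-F_k)^2}$, and the orthogonality (isometry) of the chaos expansion together with the reduction to $\E F=0$ to identify the tail sum $\sum_{n=k+1}^{\infty} n!\,\|f_n\|_n^2$. Nothing is missing.
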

\begin{proof}
By the definition of the Wasserstein distance in (\ref{eqn:defWasserstein}) and the Cauchy-Schwarz inequality, we obtain
\begin{eqnarray*}
d_W\left(\frac{F}{\sqrt{\V F}},\frac{F_k}{\sqrt{\V F}}\right)  &=& \sup\limits_{h\in \Lip(1)}\left|\E h\left(\frac{F}{\sqrt{\V F}}\right)-\E h\left(\frac{F_k}{\sqrt{\V F}}\right)\right|\\ &\leq& \E \left|\frac{F}{\sqrt{\V F}}-\frac{F_k}{\sqrt{\V F}}\right|= \frac{\E|F-F_k|}{\sqrt{\V F}}\\ &\leq& \frac{\sqrt{\E(F-F_k)^2}}{\sqrt{\V F}}
\leq \frac{\sqrt{\E\left(\sum_{n=k+1}^{\infty}I_n(f_n)\right)^2}}{\sqrt{\V F}}
\end{eqnarray*}
and (\ref{eqn:dW1}) is a direct consequence of (\ref{eqn:chaosvariance}).

\end{proof}
For the second expression in (\ref{eqn:bounddW}) we need the following inequality:

\begin{lemma}\label{lem:dW2}
We have
\begin{equation}\label{eqn:dW2}
\!\!\!\!\!\!\!d_W\left(\frac{F_k}{\sqrt{\V F}},N\right)\leq \frac{\sum_{n=k+1}^{\infty} n!||f_n||_n^2}{\V F}+k\sum_{1\leq i,j\leq k}\frac{\sqrt{R_{ij}}}{\V F}+k^{\frac{7}{2}}\sum_{i=1}^k\frac{\sqrt{\tilde{R_i}}}{\V F}
\end{equation}
for every $k\in\mathbb{N}$.
\end{lemma}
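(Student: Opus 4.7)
The plan is to invoke the normal approximation bound for centred Poisson functionals $G$ from Theorem~3.1 of \cite{Peccatietal2010},
\begin{equation*}
d_W(G,N)\ \leq\ \E\bigl|1-\langle DG,-DL^{-1}G\rangle\bigr|+\int_X\E\bigl[(D_xG)^2|D_xL^{-1}G|\bigr]\mu(dx),
\end{equation*}
applied to $G=F_k/\sqrt{\V F}$. From the finite chaos expansion one reads off
\begin{equation*}
D_xF_k=\sum_{n=1}^{k}n\,I_{n-1}(f_n(x,\cdot)),\qquad -D_xL^{-1}F_k=\sum_{n=1}^{k}I_{n-1}(f_n(x,\cdot)),
\end{equation*}
so the two terms in the abstract bound become $\E|1-\langle DF_k,-DL^{-1}F_k\rangle/\V F|$ and $\V F^{-3/2}\int_X\E[(D_xF_k)^2|D_xL^{-1}F_k|]\mu(dx)$.

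For the first term I would use that the Wiener-It\^o isometry gives $\E\langle DF_k,-DL^{-1}F_k\rangle=\V F_k$, so the triangle inequality followed by Cauchy-Schwarz yields
\begin{equation*}
\E\Bigl|1-\tfrac{\langle DF_k,-DL^{-1}F_k\rangle}{\V F}\Bigr|\ \leq\ \tfrac{\V F-\V F_k}{\V F}+\tfrac{1}{\V F}\sqrt{\V\langle DF_k,-DL^{-1}F_k\rangle}.
\end{equation*}
By (\ref{eqn:chaosvariance}) the first summand equals $\sum_{n=k+1}^{\infty}n!\|f_n\|_n^2/\V F$, which is exactly the first term in (\ref{eqn:dW2}). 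Writing $\langle DF_k,-DL^{-1}F_k\rangle$ as the sum over $i,j\in\{1,\ldots,k\}$ of $i\,X_{ij}$ where $X_{ij}=\int_X I_{i-1}(f_i(x,\cdot))I_{j-1}(f_j(x,\cdot))\mu(dx)$ satisfies $\V X_{ij}=R_{ij}$ by definition, Minkowski's inequality in $L^2(\P)$ together with $i\leq k$ produces the middle term $k\sum_{i,j=1}^{k}\sqrt{R_{ij}}/\V F$.

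For the second term I would apply Cauchy-Schwarz twice:
\begin{equation*}
\int_X\E\bigl[(D_xF_k)^2|D_xL^{-1}F_k|\bigr]\mu(dx)\ \leq\ \sqrt{\int_X\E(D_xF_k)^4\mu(dx)}\sqrt{\int_X\E(D_xL^{-1}F_k)^2\mu(dx)}.
\end{equation*}
The Wiener-It\^o isometry together with (\ref{eqn:chaosvariance}) bounds the second factor by $\sqrt{\V F}$. For the first factor, two applications of the elementary inequality $(\sum_{n=1}^{k}a_n)^2\leq k\sum a_n^2$ combined with $n\leq k$ give the pointwise estimate $(D_xF_k)^4\leq k^{7}\sum_{n=1}^{k}I_{n-1}(f_n(x,\cdot))^{4}$; integrating and using $\sqrt{\sum_i\tilde R_i}\leq\sum_i\sqrt{\tilde R_i}$ yields $k^{7/2}\sum_{i=1}^{k}\sqrt{\tilde R_i}\sqrt{\V F}$, which after division by $\V F^{3/2}$ gives the last summand in (\ref{eqn:dW2}).

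The main obstacle is the bookkeeping of the powers of $k$: one must distribute the Cauchy-Schwarz applications so that all the growth in $k$ is concentrated in the $L^4$ moment of $D_xF_k$ while $D_xL^{-1}F_k$ is kept in its natural $L^2$ form, as this is what produces exactly the exponent $7/2$ rather than the larger power coming from more naive splittings such as a triangle inequality on $|D_xL^{-1}F_k|$.
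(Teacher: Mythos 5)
Your proposal is correct and follows essentially the same route as the paper: apply Theorem 3.1 of \cite{Peccatietal2010} to $F_k/\sqrt{\V F}$, split off the tail $\sum_{n>k}n!\|f_n\|_n^2/\V F$ using $\E\langle DF_k,-DL^{-1}F_k\rangle=\V F_k$, and then estimate the two remaining terms. The only difference is that the paper delegates those last two estimates to the proof of Theorem 4.1 in \cite{ReitznerSchulte2011}, whereas you derive them explicitly (Minkowski in $L^2(\P)$ with $i\leq k$ for the factor $k\sum_{i,j}\sqrt{R_{ij}}$, and Cauchy--Schwarz plus $(\sum_{n=1}^k a_n)^2\leq k\sum_n a_n^2$ with $n\leq k$ for the factor $k^{7/2}\sum_i\sqrt{\tilde R_i}$), and these computations check out.
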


\begin{proof}
Theorem 3.1 in \cite{Peccatietal2010} tells us that
\begin{eqnarray*}
d_W\left(\frac{F_k}{\sqrt{\V F}},N\right) &\leq& \E\left|1-\frac{1}{\V F}\int_X D_z F_k \left(-D_zL^{-1}F_k\right)\mu(dz)\right|\\ &&+\frac{1}{(\V F)^{\frac{3}{2}}}\int_X \E\left[\left(D_z F_k\right)^2|D_zL^{-1}F_k|\right]\mu(dz),
\end{eqnarray*}
where $L^{-1}$ is the inverse of the Ornstein-Uhlenbeck generator as given in (\ref{eqn:OrnsteinUhlenbeck}). By the fact that $1=\sum_{n=1}^k n! ||f_n||_n^2/\V F+\sum_{n=k+1}^\infty n! ||f_n||_n^2/\V F$ and the triangle inequality, we obtain
\begin{eqnarray*}
d_W\left(\frac{F_k-\E F}{\sqrt{\V F}},N\right) &\leq& \frac{\sum_{n=k+1}^{\infty} n!||f_n||_n^2}{\V F}\\ &&+\frac{1}{\V F}\E\left|\sum_{n=1}^k n!||f_n||_n^2-\int_X D_zF_k \left(-D_zL^{-1}F_k\right)\mu(dz)\right|\\ &&+\frac{1}{(\V F)^{\frac{3}{2}}}\int_X \E\left[\left(D_zF_k\right)^2|D_zL^{-1}F_k|\right]\mu(dz).
\end{eqnarray*}
In the proof of Theorem 4.1 in \cite{ReitznerSchulte2011}, it is shown that the last two expressions are bounded by 
$$ k\sum_{1\leq i,j\leq k}\frac{\sqrt{R_{ij}}}{\V F}+k^{\frac{7}{2}}\sum_{i=1}^k\frac{\sqrt{\tilde{R_i}}}{\V F},$$
which leads to (\ref{eqn:dW2}).
\end{proof}
\paragraph{Proof of Theorem \ref{thm:abstractCLT}.} The inequality (\ref{eqn:inequalitydW}) in a) is a direct consequence of (\ref{eqn:bounddW}), Lemma \ref{lem:dW1} and Lemma \ref{lem:dW2}. If the conditions of b) are satisfied, for every $\varepsilon>0$ we can find $k_0\in\mathbb{N}$ such that
$$2\frac{\sqrt{\sum_{n=k_0+1}^{\infty}n! ||f_n||_n^2}}{\sqrt{\V F}}<\frac{\varepsilon}{2}$$
for all $\lambda>\lambda_0$. Because of (\ref{eqn:conditionRs}) it exists a constant $\overline{\lambda}>0$ such that
$$k_0\sum_{1\leq i,j\leq k_0}\frac{\sqrt{R_{ij}}}{\V F}+k_0^{\frac{7}{2}}\sum_{i=1}^{k_0}\frac{\sqrt{\tilde{R_i}}}{\V F}<\frac{\varepsilon}{2}$$ 
for all $\lambda>\overline{\lambda}$. Combining these inequalities with (\ref{eqn:inequalitydW}), we obtain
$$d_W\left(\frac{F-\E F}{\sqrt{\V F}},N\right)<\varepsilon$$
for all $\lambda>\max\left\{\lambda_0,\overline{\lambda}\right\}$.\hfill $\Box$ \\ 

\begin{remark}\rm
Our abstract central limit Theorem \ref{thm:abstractCLT} and Theorem \ref{thm:CLT} have the drawback that they do not give a rate of convergence. This problem is caused by the truncation step. The second expression in (\ref{eqn:bounddW}) vanishes as $\lambda\rightarrow\infty$. But the first summand does not necessarily converge for a fixed $k$ as $\lambda\rightarrow\infty$. By taking $k\rightarrow\infty$ in the previous proof, we obtain convergence as $\lambda\rightarrow\infty$, but cannot give a rate of convergence. An alternative approach would be to apply the underlying general central limit theorem (see Theorem 3.1 in \cite{Peccatietal2010}) directly to the infinite Wiener-It\^o chaos expansion, which gives a sum of an infinite number of expected values of products of multiple Wiener-It\^o integrals as an upper bound. These summands are also the $R_{ij}$ and $\tilde{R}_i$ in our Theorem \ref{thm:abstractCLT} and it is possible to show an upper bound and a rate of convergence for each of them as one will see in Section \ref{sec:CLTPV}. But it seems very hard to prove that the series over these bounds converges.
\end{remark}

In order to neatly formulate a very helpful criterion for the condition (\ref{eqn:conditionRs}) in Theorem \ref{thm:abstractCLT}, we need the following notation:
Let $\Pi_{i,j}$ be the set of all partitions $\pi$ of the variables $$x_1^{(1)},\hdots,x_{i}^{(1)},x_1^{(2)},\hdots,x_{i}^{(2)},x_1^{(3)},\hdots,x_{j}^{(3)},x_1^{(4)},\hdots,x_{j}^{(4)}$$
such that
\begin{itemize}
 \item all variables with the same upper index are in different elements of $\pi$
\item every element of $\pi$ has at least two variables as elements.
\end{itemize}
By $\overline{\Pi}_{i,j}$, we denote the set of all partitions $\pi$ in $\Pi_{i,j}$ such that for any decomposition of $\left\{1,2,3,4\right\}$ in two disjoint nonempty sets $M_1$ and $M_2$ there exist $l_1\in M_1$ and $l_2\in M_2$ such that two variables $x^{(l_1)}_{k_1}$ and $x^{(l_2)}_{k_2}$ are in the same element of $\pi$.

We say that a partition $\pi$ connects two variables if they are in the same element of $\pi$, and $|\pi|$ stands for the number of elements of a partition $\pi\in\Pi_{i,j}$. By $(f_i*f_i*f_j*f_j)_{\pi}$, we denote the function from $X^{|\pi|}$ to $\R$ we obtain if we replace all variables that are in the same element of $\pi$ by a new variable. Using this notation, we can give the following upper bounds for $R_{ij}$ and $\tilde{R}_i$:

\begin{proposition}\label{prop:product}
If $\int_{X^{|\pi|}}|(f_i*f_i*f_i*f_i)_{\pi}|d\mu^{|\pi|}<\infty$ for all $\pi\in\overline{\Pi}_{i,i}$ and\\ $\int_{X^{|\pi|}}|(f_j*f_j*f_j*f_j)_{\pi}|d\mu^{|\pi|}<\infty$ for all $\pi\in\overline{\Pi}_{j,j}$, it holds
\begin{equation}\label{eqn:boundRij}
R_{ij}\leq\sum_{\pi\in\overline{\Pi}_{i,j}} \int_{X^{|\pi|}}|(f_i*f_i*f_j*f_j)_{\pi}|d\mu^{|\pi|}.
\end{equation}
Moreover, one has
\begin{equation}\label{eqn:boundRi}
\tilde{R}_{i}\leq\sum_{\pi\in\overline{\Pi}_{i,i}} \int_{X^{|\pi|}}|(f_i*f_i*f_i*f_i)_{\pi}|d\mu^{|\pi|}.
\end{equation}
\end{proposition}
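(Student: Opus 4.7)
The plan is to apply the Poisson diagram (Wick-type) formula, which expresses $\E[I_{n_1}(g_1)\cdots I_{n_m}(g_m)]$ as a finite sum of integrals indexed by partitions $\sigma$ of the $n_1+\cdots+n_m$ argument slots: $\sigma$ must be \emph{nonflat} (no two slots from the same $g_k$ in a common block) and every block of $\sigma$ must have size at least two. Both $R_{ij}$ and $\tilde R_i$ are expectations of polynomial expressions in $I_{i-1}(f_i(z,\cdot))$ and $I_{j-1}(f_j(z,\cdot))$ further integrated against $\mu$ in the parameter $z$. The idea is to freeze the $z$-parameter(s), expand by the diagram formula, and then apply Fubini to integrate out $z$; each $z$-integration ties together the $z$-slots of all factors that share that parameter, contributing one additional block to the resulting partition of the full variable set.

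Consider first $\tilde R_i = \int \E[I_{i-1}(f_i(z,\cdot))^4]\,\mu(dz)$. With $z$ fixed, the diagram formula expands the integrand as a sum over partitions of the $4(i-1)$ free slots. The outer $z$-integration appends one block of size four, made up of the $z$-slot of each of the four copies of $f_i$. Viewing the result as a partition $\pi$ of all $4i$ variables, $\pi$ is nonflat (the $z$-block has one slot per factor, and the diagram blocks are already nonflat) and every block has at least two elements, so $\pi \in \Pi_{i,i}$. Since the $z$-block meets every factor, for any nontrivial split $M_1\cup M_2 = \{1,2,3,4\}$ the $z$-block crosses $M_1\mid M_2$, so $\pi \in \overline{\Pi}_{i,i}$. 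Pulling absolute values inside gives (\ref{eqn:boundRi}).

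For $R_{ij}$ I would write $R_{ij} = \V(Z)$ with $Z = \int I_{i-1}(f_i(z,\cdot))I_{j-1}(f_j(z,\cdot))\,\mu(dz)$, and expand $\E Z^2$ as a double integral in $(z_1,z_2)$ of the expectation of a four-fold product. The same bookkeeping shows that each $z_\ell$-integration appends a size-two block joining the $z_\ell$-slots of its two factors: factors $1,2$ for $\ell=1$ and factors $3,4$ for $\ell=2$. Every resulting partition lies in $\Pi_{i,j}$, and the induced graph on $\{1,2,3,4\}$ (nodes = factors, edges given by blocks meeting more than one factor) always carries the two edges $1$--$2$ and $3$--$4$. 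The diagram expansion of $(\E Z)^2$ is exactly the sub-sum of those partitions for which no block crosses between $\{1,2\}$-slots and $\{3,4\}$-slots, i.e.\ those whose factor-graph splits into $\{1,2\}$ and $\{3,4\}$. Subtracting therefore keeps precisely the partitions with at least one additional crossing edge; together with the built-in $1$--$2$ and $3$--$4$ edges this makes the factor-graph connected, so every cut $M_1\mid M_2$ is crossed and the partition belongs to $\overline{\Pi}_{i,j}$. Taking absolute values inside yields (\ref{eqn:boundRij}).

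The main obstacle will be the technical justification of the diagram expansion and the interchange of the finite diagram sum with the outer $\mu$-integrations in $z$; this is where the integrability assumptions on $(f_i*f_i*f_i*f_i)_\pi$ for $\pi \in \overline{\Pi}_{i,i}$ (and likewise for $f_j$) enter. A Cauchy-Schwarz bound on the contracted kernels should dominate every integrand appearing in $\E Z^2$ and in $\E[I_{i-1}(f_i(z,\cdot))^4]$ by products of these ``fully connected'' integrals, which both secures absolute convergence and validates the clean cancellation between $\E Z^2$ and $(\E Z)^2$ described above.
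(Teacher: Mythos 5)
Your argument is correct and is essentially the paper's own proof: both expand $\tilde R_i$ and $\E Z^2$ (with $R_{ij}=\V(Z)$, $Z=\int_X I_{i-1}(f_i(z,\cdot))I_{j-1}(f_j(z,\cdot))\,\mu(dz)$) into the partition sum coming from the product/diagram formula for multiple Wiener-It\^o integrals, observe that the outer $\mu$-integrations in $z$ append the extra blocks (one block of size four for $\tilde R_i$, two blocks of size two for $R_{ij}$), identify $(\E Z)^2$ with exactly the ``disconnected'' partitions so that the surviving ones lie in $\overline{\Pi}_{i,j}$, and then take absolute values, with the stated integrability (via a Cauchy--Schwarz domination of the mixed contractions) justifying the use of the product formula and Fubini. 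The only difference is presentational: the paper reaches the same partition expansion by applying the pairwise product formula to $I_{i-1}(f_i(s,\cdot))I_{i-1}(f_i(t,\cdot))$ and $I_{j-1}(f_j(s,\cdot))I_{j-1}(f_j(t,\cdot))$ and then the covariance formula, while you invoke the four-factor diagram formula directly (with a harmless relabeling of the factors).
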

\begin{proof}
The fact that the integrals over $(f_i*f_i*f_i*f_i)_{\pi}$ and $(f_j*f_j*f_j*f_j)_{\pi}$ are finite ensures that we can apply the product formula for multiple Wiener-It\^o integrals (see Theorem 3.1 in \cite{Surgailis1984} or Proposition 6.5.1 in \cite{PeccatiTaqqu2010}) to $I_{i-1}(f_i(s,\cdot))I_{i-1}(f_i(t,\cdot))$ and $I_{j-1}(f_j(s,\cdot))I_{j-1}(f_j(t,\cdot))$ for $\mu$-almost all $(s,t)\in X^2$. This formula gives us the kernels of the chaos expansions of $I_{i-1}(f_i(s,\cdot))I_{i-1}(f_i(t,\cdot))$ and $I_{j-1}(f_j(s,\cdot))I_{j-1}(f_j(t,\cdot))$. Combining this with the covariance formula (\ref{eqn:covariance}), we know that
\begin{eqnarray*}
&&\E\int_X\int_X I_{i-1}(f_i(s,\cdot)) I_{i-1}(f_i(t,\cdot))I_{j-1}(f_j(s,\cdot)) I_{j-1}(f_j(t,\cdot))\mu(ds)\mu(dt)\\
&=&\sum_{\pi\in\Pi_{i-1,j-1}} \int_X\int_X\int_{X^{|\pi|}}(f_i(s,\cdot)*f_i(t,\cdot)*f_j(s,\cdot)*f_j(t,\cdot))_{\pi} d\mu^{|\pi|}\mu(ds)\mu(dt).
\end{eqnarray*}
For $i\neq j$ there is no partition connecting either variables of the first and the third function or variables of the second and the fourth function. Hence, we have only partitions from $\overline{\Pi}_{i,j}$ if we add $s$ and $t$ to the partitions. For $i=j$ we have $(i-1)!(i-1)!$ partitions where the variables of the first and third and of the second and fourth function are pairwise connected. The sum over this partitions is $(i-1)!(i-1)!||f_i||_i^4$. But exactly this term is subtracted for $i=j$. If we add $s$ and $t$ to the remaining partitions, we also obtain partitions from $\overline{\Pi}_{i,i}$.
For $\tilde{R}_i$ we can also apply the product formula and it holds
\begin{eqnarray*}
\tilde{R}_i &=& \E\int_X I_{i-1}(f_i(z,\cdot))^4\mu(dz)\\
&=&\sum_{\pi\in\Pi_{i-1,i-1}} \int_X \int_{X^{|\pi|}} (f_i(z,\cdot)*f_i(z,\cdot)*f_i(z,\cdot)*f_i(z,\cdot))_{\pi}d\mu^{|\pi|}\mu(dz)\\
&\leq & \sum_{\pi\in\overline{\Pi}_{i,j}}\int_X\int_{X^{|\pi|}}|(f_i*f_i*f_i*f_i)_{\pi}|d\mu^{|\pi|}.
\end{eqnarray*}
\end{proof}

\section{Proof of Theorem \ref{thm:variance}}\label{sec:variance}

Because of Theorem 1 in \cite{HevelingReitzner2009}, we know that $\PV(K)\in L^2(\P)$ and Theorem \ref{thm:chaos} implies the existence of a Wiener-It\^o chaos expansion. In the following, we compute the kernels of this decomposition and use (\ref{eqn:chaosvariance}) to prove our bounds for the variance of $\PV(K)$ in Theorem \ref{thm:variance}.

From now on, we denote by $\rho(\cdot,\cdot)$ the usual Euclidean distance, which is given by $\rho(x,y)=||x-y||$ for two points $x,y\in\R^d$ and $\rho(x, A)=\inf\limits_{y\in A}\rho(x,y)=\inf\limits_{y\in A}||x-y||$ for $x\in \R^d$ and $A\subset \R^d$. Moreover, $B^d(z,\delta)$ stands for a ball with center $z$ and radius $\delta$ in $\R^d$. Using this notation, we have the following formula for the kernels of the Wiener-It\^o chaos expansion of $\PV(K)$:

\begin{lemma}\label{lem:kernels}
Let $x_1,\hdots,x_n\in \R^d$. For $y\in\R^d$ we define $\overline{x}(y):=\arg\max_{x=x_1,\hdots,x_n}\rho(y,x)$ and $z(y,\eta):=\arg\min_{z\in\eta}\rho(y,z)$. Then
\begin{eqnarray}\label{eqn:kernels}
f_n(x_1,\hdots,x_n)\! &=& \!\frac{(-1)^n}{n!}\int_{\R^d}\!\!\!\1(\overline{x}(y)\!\notin\!K)\P(z(y,\eta)\notin K^C\!\!\cup B^d(y,||y-\overline{x}(y)||))dy\\ &&-\frac{(-1)^n}{n!}\int_{\R^d}\!\!\!\1(\overline{x}(y)\!\in\! K)\P(z(y,\eta)\notin K\cup B^d(y,||y-\overline{x}(y)||))dy.\notag
\end{eqnarray}
\end{lemma}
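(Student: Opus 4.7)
The plan is to reduce the claim to a combinatorial calculation on the indicator $g_y(\eta) := \1(z(y,\eta)\in K)$. Starting from the Fubini identity $\PV(K) = \int_{\R^d}g_y(\eta)\,dy$ and exchanging the expectation with the $y$-integration in the definition (\ref{eqn:definitionfn}) gives
$$f_n(x_1,\ldots,x_n) = \frac{1}{n!}\int_{\R^d}\E\bigl[D_{x_1,\ldots,x_n}g_y(\eta)\bigr]\,dy.$$
Everything then reduces to computing the pointwise difference operator acting on $g_y$, for which I would invoke the explicit formula (\ref{eqn:formulaD}) together with the symmetry of $D_{x_1,\ldots,x_n}$ in its arguments to order $x_1,\ldots,x_n$ by their distance to $y$.

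Fix $y$ and $\eta$, and assume without loss of generality that $r_j := \|y-x_j\|$ is strictly increasing in $j$; in particular $\overline{x}(y) = x_n$. Set $r := \|y-z(y,\eta)\|$ and let $k\in\{1,\ldots,n+1\}$ be the unique index with $r_{k-1} < r < r_k$, using the convention $r_0 := 0$, $r_{n+1} := +\infty$. For every $I\subset\{1,\ldots,n\}$ the nearest point of the enlarged configuration $\eta+\sum_{i\in I}\delta_{x_i}$ to $y$ is $z(y,\eta)$ when $\min I\geq k$ (vacuously so when $I=\emptyset$) and is $x_{\min I}$ otherwise. The alternating sum in (\ref{eqn:formulaD}) therefore splits into a contribution $\1(z(y,\eta)\in K)\sum_{I\subset\{k,\ldots,n\}}(-1)^{n+|I|}$ and, for each $j<k$, a contribution $\1(x_j\in K)\sum_{I:\min I=j}(-1)^{n+|I|}$. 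Each of these inner sums has the form $\pm\sum_{l=0}^{m}\binom{m}{l}(-1)^l$ and hence vanishes unless $m=0$; a direct check shows that the only regime in which anything survives is $k=n+1$, i.e.\ $r > r_n = \|y-\overline{x}(y)\|$, in which case the two surviving terms combine to
$$D_{x_1,\ldots,x_n}g_y(\eta) = (-1)^n\bigl[\1(z(y,\eta)\in K) - \1(\overline{x}(y)\in K)\bigr]\,\1\bigl(\|y-z(y,\eta)\| > \|y-\overline{x}(y)\|\bigr).$$

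Taking expectations and splitting on the value of $\1(\overline{x}(y)\in K)$ then finishes the argument. Setting $B := B^d(y,\|y-\overline{x}(y)\|)$, the last indicator is $\1(z(y,\eta)\notin B)$. On $\{\overline{x}(y)\in K\}$ the bracket reduces to $-\1(z(y,\eta)\notin K)$, giving the contribution $(-1)^{n+1}\P(z(y,\eta)\notin K\cup B)$; on $\{\overline{x}(y)\notin K\}$ it reduces to $\1(z(y,\eta)\in K\setminus B) = \1(z(y,\eta)\notin K^c\cup B)$, giving $(-1)^n\P(z(y,\eta)\notin K^c\cup B)$. Dividing by $n!$ and integrating over $y\in\R^d$ reproduces exactly (\ref{eqn:kernels}).

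The main obstacle is the combinatorial cancellation in the middle step: one must bookkeep carefully which subsets $I$ leave the original nearest neighbour $z(y,\eta)$ in place and which promote an $x_j$ to nearest neighbour, and then verify via $\sum_{l=0}^{m}\binom{m}{l}(-1)^l = \delta_{m,0}$ that every contribution outside the regime $r > \|y-\overline{x}(y)\|$ cancels. The remaining prerequisites --- almost sure uniqueness of the nearest neighbour (from non-atomicity of $\mu$), strict ordering of the $r_j$ for $\ell_d^n$-almost every $(x_1,\ldots,x_n)$, and the applicability of Fubini (guaranteed by $\PV(K)\in L^2(\P)$ together with Theorem \ref{thm:chaos}) --- are standard.
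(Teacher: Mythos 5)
Your proposal is correct and follows essentially the same route as the paper: reduce to the pointwise difference operator of $\1(z(y,\eta)\in K)$ via (\ref{eqn:formulaD}), show that all terms cancel except those where $z(y,\eta)$ lies beyond $\overline{x}(y)$, and take expectations. The only difference is cosmetic bookkeeping in the cancellation step --- you sort the $x_j$ by distance and use $\sum_{l}\binom{m}{l}(-1)^l=\delta_{m,0}$, whereas the paper pairs each nonempty $I\not\ni i_{\max}$ with $I\cup\{i_{\max}\}$ --- so the two arguments are equivalent.
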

\begin{proof}
Since $z(y,\eta)$ is the nucleus of the Voronoi cell $y$ belongs to, it is easily seen that
$$\PV(K)=\Vol(\{y\in\R^d: z(y,\eta)\in K\})=\int_{\R^d} \1(z(y,\eta)\in K)dy.$$
Combining this with (\ref{eqn:formulaD}), we obtain
\begin{eqnarray*}
D_{x_1,\hdots,x_n}\PV(K) &=& \sum_{I\subset\{1,\hdots,n\}} (-1)^{n+|I|} \PV(K)(\eta+\sum_{i\in I}\delta_{x_i})\\
&=& \int_{\R^d} \sum_{I\subset\{1,\hdots,n\}} (-1)^{n+|I|} \1(z(y,\eta\cup\{x_i:i\in I\})\in K) dy.
\end{eqnarray*}
Now we consider the sum of the indicator functions on the right hand side for a fixed $y\in K$. Let $i_{max}$ be the index of the $x_i$ that maximizes $\overline{x}(y)$. For $I\subset \{1,\hdots,n\}\setminus\{i_{max}\}$ with $I\neq\emptyset$, it holds $z(y,\eta\cup\{x_i:i\in I\})=z(y,\eta\cup\{x_i:i\in I\cup\{i_{max}\}\})$ and the summands for $I$ and $I\cup\{i_{max}\}$ on the right hand side cancel out because of the different signs. Hence, we obtain
$$D_{x_1,\hdots,x_n}\PV(K)=\int_{\R^d}(-1)^{n}(\1(z(y,\eta)\in K)-\1(z(y,\eta\cup\{\overline{x}(y)\})\in K))dy.$$
Now it is easy to see that
\begin{eqnarray*}
&&\1(z(y,\eta)\in K)-\1(z(y,\eta\cup\{\overline{x}(y)\})\in K)\\ &&=\begin{cases} 1, & \rho(y,\overline{x}(y))\leq \rho(y,z(y,\eta)),\ z(y,\eta)\in K,\ \overline{x}(y)\notin K\\ -1, & \rho(y,\overline{x}(y))\leq \rho(y,z(y,\eta)),\ z(y,\eta)\notin K,\ \overline{x}(y)\in K \end{cases}.
\end{eqnarray*}
Combining this with the definition of the kernels in (\ref{eqn:definitionfn}), we obtain
\begin{eqnarray*}
f_n(x_1,\hdots,x_n) &=& \frac{(-1)^n}{n!}\int_{\R^d}\1(\overline{x}(y)\notin K)\P(z(y,\eta)\notin K^C\cup B^d(y,||y-\overline{x}(y)||))dy\\ &&-\frac{(-1)^n}{n!}\int_{\R^d}\1(\overline{x}(y)\in K)\P(z(y,\eta)\notin K\cup B^d(y,||y-\overline{x}(y)||))dy.
\end{eqnarray*}
\end{proof}
\begin{remark}\rm
For $f_1$ we have the representation
$$f_1(x)=\begin{cases}\E\Vol(\{z\in\R^d:\rho(z,x)\leq \rho(z,\eta\cap K^C)\leq \rho(z,\eta\cap K)\}), &x\in K\\ -\E\Vol(\{z\in\R^d:\rho(z,x)\leq \rho(z,\eta\cap K)\leq \rho(z,\eta\cap K^C)\}), &x\in K^C\end{cases},$$
which means that $|f_1(x)|$ is exactly the volume of the points that change between $\A(K)$ and $\A(K)^C$ if the point $x$ is added to the Poisson point process.
\end{remark}
Our next goal is to compute upper bounds for $||f_n||_n^2$ such that we obtain by (\ref{eqn:chaosvariance}) an upper bound for the variance of $\PV(K)$ and can check condition (\ref{eqn:conditionkernels}) in Theorem \ref{thm:abstractCLT}.
In formula (\ref{eqn:kernels}), the distance between a point $y\in \R^d$ and $\overline{x}(y)$ plays an important r\^ole. In order to handle this quantity in the following, we define functions $h_n: (\R^d)^n\rightarrow\R\times\R^d$ by
$$h_n(x_1,\hdots,x_n)=(\min_{y\in \R}\max_{i=1,\hdots,n}\rho(y,x_i),\arg\min_{y\in \R}\max_{i=1,\hdots,n}\rho(y,x_i)).$$
From a geometrical point of view, $h_n$ gives the radius and the center of the smallest ball that contains all points $x_1,\hdots,x_n$.

The function $h_n$ allows us to give the following upper bound for $f_n$:

\begin{lemma}\label{lem:Boundfn1}
Let $x_1,\hdots,x_n\in\R^d$ and let $r=h_n^{(1)}(x_1,\hdots,x_n)=\min\limits_{y\in \R}\max\limits_{i=1,\hdots,n}\rho(y,x_i)$. Then
\begin{equation}\label{eqn:Boundfn1}
|f_n(x_1,\hdots,x_n)|\leq \frac{1}{(n-1)!\lambda}\exp(-\lambda \kappa_d r^d).
\end{equation}
\end{lemma}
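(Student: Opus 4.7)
The plan is to start from the formula in Lemma \ref{lem:kernels}, discard the indicators (which partition $\R^d$ into the regions $\{\overline{x}(y)\in K\}$ and $\{\overline{x}(y)\notin K\}$) via the triangle inequality, and replace each of the two probabilities by the larger probability $\P(z(y,\eta)\notin B^d(y,\|y-\overline{x}(y)\|))$. This latter event says that the nearest point of $\eta$ to $y$ lies farther than $\|y-\overline{x}(y)\|$ away, which is equivalent to $\eta$ missing $B^d(y,\|y-\overline{x}(y)\|)$ entirely. The Poisson void probability therefore gives
$$|f_n(x_1,\hdots,x_n)|\leq\frac{1}{n!}\int_{\R^d}\exp\bigl(-\lambda\kappa_d\|y-\overline{x}(y)\|^d\bigr)\,dy.$$

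Next I would reduce this integral over $\R^d$ to a one-dimensional integral via the coarea formula. Setting $\rho(y):=\max_{i=1,\hdots,n}\|y-x_i\|=\|y-\overline{x}(y)\|$, the function $\rho$ is $1$-Lipschitz with $\|\nabla\rho(y)\|=1$ at almost every $y$, and its sublevel sets $A_t:=\{y:\rho(y)\leq t\}$ equal $\bigcap_{i=1}^n B^d(x_i,t)$. By the definition of $r=h_n^{(1)}(x_1,\hdots,x_n)$, the set $A_t$ is empty precisely for $t<r$, so Theorem \ref{thm:coarea} yields
$$\int_{\R^d}e^{-\lambda\kappa_d\rho(y)^d}\,dy=\int_r^\infty e^{-\lambda\kappa_d t^d}\,\cH^{d-1}(\bd A_t)\,dt.$$

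The key geometric input is the monotonicity of surface area for convex bodies: since $A_t$ is convex and contained in $B^d(x_1,t)$, one has $\cH^{d-1}(\bd A_t)\leq\cH^{d-1}(\bd B^d(x_1,t))=d\kappa_d t^{d-1}$. The substitution $u=\lambda\kappa_d t^d$ then collapses the remaining integral to $\lambda^{-1}e^{-\lambda\kappa_d r^d}$, so that altogether
$$|f_n(x_1,\hdots,x_n)|\leq\frac{1}{n!\lambda}e^{-\lambda\kappa_d r^d}\leq\frac{1}{(n-1)!\lambda}e^{-\lambda\kappa_d r^d},$$
which is (\ref{eqn:Boundfn1}) with an extra factor of $n$ to spare. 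I do not expect any serious obstacle; the only mildly subtle step is recognising that the exponential decay should be isolated in the scalar variable $\rho(y)$, after which the convex-containment comparison of perimeters tames the cross-sectional measures $\cH^{d-1}(\bd A_t)$ immediately.
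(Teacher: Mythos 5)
Your proof is correct, and its second half takes a genuinely different route from the paper's. Both arguments begin identically: drop the indicators in Lemma \ref{lem:kernels} and enlarge the two probabilities to $\P(z(y,\eta)\notin B^d(y,\|y-\overline{x}(y)\|))=\exp(-\lambda\kappa_d\|y-\overline{x}(y)\|^d)$. From there the paper covers $\R^d$ by the $n$ complements $\R^d\setminus B^d(x_i,r)$ (possible because $\max_i\rho(y,x_i)\geq r$ for every $y$), bounds $\|y-\overline{x}(y)\|\geq\|y-x_i\|$ on the $i$-th piece, and integrates in polar coordinates around each $x_i$; the sum over $i$ is what turns $1/n!$ into $1/(n-1)!$. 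You instead apply the coarea formula to the $1$-Lipschitz function $\rho(y)=\max_i\|y-x_i\|$ (with $\|\nabla\rho\|=1$ a.e.) and control the level sets $\rho^{-1}(t)=\partial\bigl(\bigcap_i B^d(x_i,t)\bigr)$, $t>r$, by the monotonicity of surface area under inclusion of convex bodies, giving $\cH^{d-1}(\rho^{-1}(t))\leq d\kappa_d t^{d-1}$. This uses two small facts the paper avoids — the (standard) surface-area monotonicity, and the identification of the level set with the boundary of the sublevel set, which is justified here since $\rho$ is convex and coercive with minimum $r<t$ — but the payoff is a single radial integral instead of $n$ of them, so you obtain the sharper bound $\frac{1}{n!\lambda}\exp(-\lambda\kappa_d r^d)$, which trivially implies (\ref{eqn:Boundfn1}). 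The paper's covering argument is more elementary, at the cost of the factor $n$, which is immaterial for the subsequent variance and CLT estimates.
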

\begin{proof}
As a consequence of Lemma \ref{lem:kernels}, one has
$$|f_n(x_1,\hdots,x_n)| \leq \frac{1}{n!}\int_{\R^d}\P(z(y,\eta)\notin B^d(y,||y-\overline{x}(y)||))dy.$$
By the definition of $r$, we know that the sets $\R^d\setminus B^d(x_i,r), i=1,\hdots,n,$ cover $\R^d$. Combining this with the previous inequality and using polar coordinates, we have
\begin{eqnarray*}
|f_n(x_1,\hdots,x_n)| &\leq& \frac{1}{n!}\sum_{i=1}^n \int_{\R^d\setminus B^d(x_i,r)}\P(z(y,\eta)\notin B^d(y,||x_i-y||))dy\\ 
&=& \frac{1}{n!}\sum_{i=1}^n \int_{\R^d\setminus B^d(x_i,r)}\exp(-\lambda \kappa_d\|x_i-y\|^d)dy  \\
&=& \frac{1}{n!}\sum_{i=1}^n \kappa_d d\int_r^\infty \exp(-\lambda\kappa_d r^d) r^{d-1} dr\\ &=& \frac{1}{(n-1)!\lambda}\exp(-\lambda \kappa_d r^d).
\end{eqnarray*}
\end{proof}
By definition, $f_n(x_1,\hdots,x_n)$ measures the effect on $\PV(K)$ of inserting points. Lemma \ref{lem:Boundfn1} reflects the fact that this effect is small if the distances between the points are large. Similar one expects that $f_n(x_1,\hdots,x_n)$ is small if all points are close together but are far away from the boundary of $K$. This effect is described by the following Lemma:

\begin{lemma}\label{lem:Boundfn2}
Let $x_1,\hdots,x_n\in\R^d$ and $(r,\overline{y})=h_n(x_1,\hdots,x_n)$. If $\delta=\rho(\overline{y},\partial K)>8r$, then 
\begin{equation}\label{eqn:Boundfn2}
|f_n(x_1,\hdots,x_n)|\leq \frac{2}{n!\lambda}\exp(-\lambda\kappa_d\delta^d/8^d ).
\end{equation}
\end{lemma}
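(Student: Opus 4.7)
The plan is to reduce the pointwise bound on $|f_n|$ to estimating a single integral and then to split the integration domain into a near and a far region around $\overline{y}$. First, I would assume without loss of generality that $\overline{y} \in K$; the case $\overline{y} \notin K$ is symmetric and simply exchanges the two integrals in~(\ref{eqn:kernels}). Since $\delta > 8r > r$ and $K$ is convex, the ball $B^d(\overline{y},r)$ lies entirely inside $K$, so all the points $x_i$ belong to $K$ and $\overline{x}(y) \in K$ for every $y \in \R^d$. By Lemma~\ref{lem:kernels} the first integral in~(\ref{eqn:kernels}) vanishes, leaving
$$|f_n(x_1,\ldots,x_n)| \leq \frac{1}{n!}\int_{\R^d} \P\bigl(z(y,\eta)\notin K \cup B^d(y,||y-\overline{x}(y)||)\bigr)\, dy.$$

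The geometric input I would exploit is the inequality $||y-\overline{x}(y)|| \geq ||y-\overline{y}||$, valid for every $y \in \R^d$, which follows from the classical fact that the centre of the smallest enclosing ball of $\{x_1,\ldots,x_n\}$ lies in the convex hull of the contact points $\{x_i : ||x_i-\overline{y}||=r\}$: a convex-combination argument produces an $x_{i^*}$ with $||x_{i^*}-\overline{y}||=r$ and $\langle x_{i^*}-\overline{y},\overline{y}-y\rangle \geq 0$, whence expanding $||y-x_{i^*}||^2$ gives $||y-x_{i^*}||^2 \geq ||y-\overline{y}||^2 + r^2$. Next I would bound the integrand probability. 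For the event inside the probability to occur, the closest $\eta$-point to $y$ must lie outside $K$ and at distance strictly larger than $||y-\overline{x}(y)||$; when $y \in K$ the first requirement forces $\eta \cap B^d(y,\rho(y,\partial K)) = \emptyset$ (otherwise the closest $\eta$-point would lie in $K$), and in any case the second requirement together with the previous inequality forces $\eta \cap B^d(y,||y-\overline{y}||) = \emptyset$, which yields the bounds $\exp(-\lambda\kappa_d\rho(y,\partial K)^d)$ and $\exp(-\lambda\kappa_d||y-\overline{y}||^d)$ respectively.

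Finally I would split the integral at $||y-\overline{y}|| = \delta/2$. On the inner ball $B^d(\overline{y},\delta/2) \subset K$ one has $\rho(y,\partial K) \geq \delta - ||y-\overline{y}||$; passing to polar coordinates, substituting $t = \delta - s$ and invoking the elementary estimate $(\delta-t)^{d-1} \leq t^{d-1}$ on $[\delta/2,\delta]$ yields the bound $\frac{1}{\lambda}\exp(-\lambda\kappa_d(\delta/2)^d)$. On the complement the bound $\exp(-\lambda\kappa_d||y-\overline{y}||^d)$ integrates to the same quantity by a routine polar-coordinate computation. Adding the two contributions, dividing by $n!$ and using the trivial inequality $\delta/2 \geq \delta/8$ produces the claim. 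The main obstacle is precisely the geometric lemma $||y-\overline{x}(y)|| \geq ||y-\overline{y}||$: without it one is restricted to the naive triangle bound $||y-\overline{x}(y)|| \geq ||y-\overline{y}|| - r$, which after the polar substitution introduces a factor $(u+r)^{d-1}$ whose bound grows with $d$ and would destroy the universal constant $2$ in the stated bound.
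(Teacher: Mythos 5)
Your proof is correct, but it takes a genuinely different route from the paper's. The paper also observes that all $x_i$ lie on one side of $\partial K$ so that only one integral in (\ref{eqn:kernels}) survives, but it then splits $\R^d$ at the ball $B^d(x_1,\delta/8)$: outside, it uses the trivial bound $\|y-\overline{x}(y)\|\geq\|y-x_1\|$, and inside, it introduces the auxiliary point $\tilde{x}=\frac{1}{2}(\overline{y}+\proj_{\partial K}(\overline{y}))$ to bound $\rho(y,\partial K)$ from below by $\rho(\tilde{x},y)\geq\delta/4$, so that both pieces reduce to the polar-coordinate computation of Lemma \ref{lem:Boundfn1} and each contributes $\frac{1}{n!\lambda}\exp(-\lambda\kappa_d\delta^d/8^d)$. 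You instead split at $B^d(\overline{y},\delta/2)$ and rely on the nontrivial geometric fact that the circumcenter lies in the convex hull of the contact points, giving $\|y-\overline{x}(y)\|\geq\|y-\overline{y}\|$; your convex-combination argument for this is correct, and your handling of the inner ball via $\rho(y,\partial K)\geq\delta-\|y-\overline{y}\|$ together with the substitution $t=\delta-s$ and $(\delta-t)^{d-1}\leq t^{d-1}$ is also fine. What your route buys is a sharper constant, $\exp(-\lambda\kappa_d\delta^d/2^d)$ instead of $\exp(-\lambda\kappa_d\delta^d/8^d)$, and it only uses $\delta>r$ rather than $\delta>8r$; what the paper's route buys is elementarity (no minimal-enclosing-ball geometry, just the distance to $x_1$ and an explicit midpoint) and direct reuse of the Lemma \ref{lem:Boundfn1} computation. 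Two cosmetic points: the appeal to convexity of $K$ when placing $B^d(\overline{y},r)$ inside $K$ is superfluous (and unavailable in the symmetric case $\overline{y}\in K^C$, where only the distance-to-boundary argument works), and since the stated bound has $8^d$, your stronger estimate of course implies it.
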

\begin{proof}
Since $\rho(\overline{y},\partial K)>8r$, all $x_1,\hdots,x_n$ are either in $K$ or $K^C$. Let $\tilde{x}=\frac{1}{2}(\overline{y}+\proj_{\partial K}(\overline{y}))$, where $\proj_{\partial K}(\overline{y})$ stands for the metric projection of $\overline{y}$ on the boundary of $K$. If $\overline{y}\in K$, it can happen that the metric projection on $\partial K$ is not unique. In this case, it does not matter which of the points is taken. Then, we have
$$\frac{\delta}{4}\leq \rho(\tilde{x},y)\leq \frac{3}{4}\delta\leq \rho(y,\partial K)\ \text{ for all }y\in B^d(x_1,\delta/8)$$
and, by (\ref{eqn:kernels}), it follows
\begin{eqnarray*}
|f_n(x_1,\hdots,x_n)| & \leq &  \frac{1}{n!}\int_{\R^d\setminus B^d(x_1,\delta/8)}\P(z(y,\eta)\notin B^d(y,||y-x_1||))dy\\ &&+\frac{1}{n!}\int_{B^d(x_1,\delta/8)}\P(z(y,\eta)\notin B^d(y,\rho(y,\partial K)))dy\\
& \leq &  \frac{1}{n!}\int_{\R^d\setminus B^d(x_1,\delta/8)}\P(z(y,\eta)\notin B^d(y,||y-x_1||))dy\\ &&+\frac{1}{n!}\int_{\R\setminus B^d(\tilde{x},\delta/8)}\P(z(y,\eta)\notin B^d(y,||\tilde{x}-y||))dy.
\end{eqnarray*}
A straightforward computation as in Lemma \ref{lem:Boundfn1} yields (\ref{eqn:Boundfn2}).
\end{proof}\\
Combining Lemma \ref{lem:Boundfn1} and Lemma \ref{lem:Boundfn2} leads to the bound
$$|f_n(x_1,\hdots,x_n)|\leq \overline{f}_n(h_n(x_1,\hdots,x_n)),$$
where $\overline{f}_n: \R\times\R^d\rightarrow\R$ is given by
\begin{eqnarray*}
\overline{f}_n(r,y)=\begin{cases} \frac{1}{(n-1)!\lambda}\exp(-\lambda\kappa_dr^d), & \rho(y,\partial K)\leq8r\\ \frac{2}{n!\lambda}\exp(-\lambda\kappa_dr^d), & \rho(y,\partial K)>8r\end{cases}.
\end{eqnarray*}
By the coarea formula Theorem \ref{thm:coarea}, we obtain for $n\geq 2$
\begin{eqnarray}\label{eqn:transformation1}
||f_n||_n^2 & \leq & \lambda^n\int_{(\R^d)^n} \overline{f}_n(h_n(x_1,\hdots,x_n))^2 dx_1\hdots dx_n\\
&=& \lambda^n\int_0^\infty\int_{\R^d}\int_{h_n^{-1}(r,y)} \overline{f}_n(r,y)^2 Jh_n(x_1,\hdots,x_n)^{-1} {\cal H}^{nd-d-1}(d(x_1,\hdots,x_n))dy dr\notag\\
&=& \lambda^n\int_0^\infty\int_{\R^d}\overline{f}_n(r,y)^2 \int_{h_n^{-1}(r,y)} Jh_n(x_1,\hdots,x_n)^{-1} {\cal H}^{nd-d-1}(d(x_1,\hdots,x_n))dy dr.\notag
\end{eqnarray}
It is easy to see that $h_n(a x_1+v,\hdots,a x_n+v)=a h_n(x_1,\hdots,x_n)+(0,v)$ for all $a>0$ and $v\in\R^d$ and a short computation shows $h_n'(ax_1+v,\hdots,ax_n+v)=h_n'(x_1,\hdots,x_n)$, which implies $$Jh_n(a x_1+v,\hdots,a x_n+v)=Jh_n(x_1,\hdots,x_n)$$
for all $a>0$ and $v\in\R^d$ and 
\begin{eqnarray*}
&& \int_{h_n^{-1}(r,y)}  Jh_n(x_1,\hdots,x_n)^{-1} {\cal H}^{nd-d-1}(d(x_1,\hdots,x_n))\\ &=& r^{nd-d-1}\int_{h_n^{-1}(0,1)}  Jh_n(x_1,\hdots,x_n)^{-1} {\cal H}^{nd-d-1}(d(x_1,\hdots,x_n)).
\end{eqnarray*}
Hence, (\ref{eqn:transformation1}) simplifies to
\begin{equation}\label{eqn:transformation2}
||f_n||^2_n\leq C_n\lambda^n \int_0^\infty\int_{\R^d} \overline{f}_n(r,y) (\kappa_dr^d)^{n-1-1/d} dy dr
\end{equation}
for $n\geq 2$ with constants
$$C_n=\kappa_d^{-n+1+1/d}\int_{h_n^{-1}(0,1)}  Jh_n(x_1,\hdots,x_n)^{-1} {\cal H}^{nd-d-1}(d(x_1,\hdots,x_n)).$$

\begin{lemma}\label{lem:boundCn}
The constants $C_n$, $n\geq 2$, are finite and there is a constant $\tilde{c}_d>0$ only depending on the dimension $d$ such that $$C_n \leq \tilde{c}_d {n \choose d+1}(n-1)$$
for $n\geq d+1$.
\end{lemma}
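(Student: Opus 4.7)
The plan is to describe $h_n^{-1}(0,1)$ explicitly via the geometry of the smallest enclosing ball, stratify it, and reduce the Hausdorff integral to a sum of constants depending only on $d$. By the standard support characterisation of the smallest enclosing ball, a tuple $(x_1,\ldots,x_n)$ lies in $h_n^{-1}(0,1)$ if and only if $\|x_i\|\le 1$ for every $i$ and $0$ belongs to the convex hull of the support set $S:=\{i:\|x_i\|=1\}$; in particular $|S|\ge 2$.

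First I would stratify $h_n^{-1}(0,1)$ according to $S\subseteq\{1,\ldots,n\}$. A short dimension count shows that only the strata with $|S|=s\in\{2,\ldots,d+1\}$ have Hausdorff dimension $nd-d-1$: for $s\le d$ the hull condition is a codimension $d-s+1$ constraint inside $(S^{d-1})^s$, while for $s=d+1$ it is an open condition, and for $s>d+1$ the stratum has strictly lower dimension and contributes zero. There are $\binom{n}{s}$ strata of type $s$, one for each choice of $S$.

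Second, on any such stratum $h_n$ is locally independent of the interior coordinates, since perturbing an interior point does not alter the smallest enclosing ball. Hence $h_n'$ has zero columns at the interior indices and $Jh_n$ agrees with $Jh_s$ evaluated at the support points. The Hausdorff measure on the stratum factorises into a natural spherical measure on the support coordinates and $\ell_d^{\otimes(n-s)}$ on the interior coordinates, and integration over the open unit ball for the interior coordinates yields $\kappa_d^{n-s}$. Summing over all strata gives
\[ C_n\le\kappa_d^{1/d-d}\sum_{s=2}^{d+1}\binom{n}{s}\kappa_d^{d+1-s}K_s, \]
where each $K_s$ is the spherical integral of $Jh_s^{-1}$ over admissible $s$-tuples and depends only on $d$. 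Bounding each $\binom{n}{s}$ by $\binom{n}{d+1}$ times a polynomial factor in $n$ yields the claimed bound $\tilde c_d\binom{n}{d+1}(n-1)$.

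The main obstacle is proving each $K_s<\infty$: the Jacobian $Jh_s$ vanishes as the support points become degenerate. By implicit differentiation of the defining equations $\|x_i-y\|=r$ for $i\in S$, one identifies $Jh_s$ up to a dimensional factor with the $(s-1)$-dimensional volume of the simplex $\mathrm{conv}\{x_i:i\in S\}$, so that $Jh_s^{-1}$ blows up exactly when that simplex degenerates. The required integrability of the reciprocal simplex volume against the uniform measure on the support configuration space is a classical estimate that can be verified by a Blaschke--Petkantschin type change of variables.
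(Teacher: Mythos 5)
Your structural reduction (support--set characterisation of the fiber, stratification by $|S|=s\in\{2,\dots,d+1\}$, the observation that $h_n$ does not depend on interior points so $Jh_n=Jh_s$ at the support points, factorisation of the measure and the binomial bookkeeping) is a genuinely different route from the paper's: the paper never analyses $Jh_n$ at all, but runs the coarea formula \emph{backwards}, writing $C_n=(n-1)d\,\kappa_d^{-n+1/d}\,\ell_{nd}\bigl(\{(x_1,\hdots,x_n): h_n(x_1,\hdots,x_n)\in[0,1]\times B^d(0,1)\}\bigr)$, and then bounds this plain Lebesgue volume by a containment argument (the at most $d+1$ boundary points lie in $B^d(0,2)$, the remaining ones in a ball of radius $1$), which is where $\binom{n}{d+1}$ and the factor $(n-1)$ come from. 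Your route could in principle work, but it forces you to control the Jacobian on each stratum, which is exactly what the paper's trick avoids.

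And it is precisely there that your argument has a genuine error plus a gap. On the full-support stratum $|S|=d+1$, implicit differentiation of $F_i=\|x_i-y\|^2-r^2=0$ gives $h'=-A^{-1}B$ with $A$ having rows $(2(y-x_i)^T,-2r)$ and $B$ block diagonal with blocks $-2(y-x_i)^T$, so $Jh_{d+1}=\sqrt{\det(BB^T)}/|\det A|=r^d/\bigl(d!\,\mathrm{Vol}_d(\operatorname{conv}\{x_i:i\in S\})\bigr)$. Hence $Jh_s$ \emph{blows up} as the support simplex degenerates, and $Jh_s^{-1}$ is proportional to the simplex volume, so it is bounded on the fiber $r=1$ by $d!\,\kappa_d$ --- the exact opposite of your claim that $Jh_s$ vanishes and $Jh_s^{-1}$ blows up at degenerate configurations. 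Consequently the ``main obstacle'' you identify, integrability of the \emph{reciprocal} simplex volume via a Blaschke--Petkantschin estimate, is the wrong problem, and it is also the one step you do not prove but only assert; as written, the finiteness of $K_s$ (the heart of the lemma) rests on an inverted identity and an unproved classical estimate. With the corrected formula, $K_{d+1}<\infty$ is immediate, but you would still owe an argument for the lower strata $s\le d$: there the identification with ``the $(s-1)$-volume up to a dimensional factor'' is not literally correct either (for $s=2$, $d=2$ one computes $Jh_2=2^{-3/2}$, a constant, while the segment length is $2r$), and you need finiteness of the Hausdorff measure of the constrained support-configuration set $\{0\in\operatorname{aff}\text{ hull}\}$ inside $(S^{d-1})^s$, as well as the product formula for Hausdorff measure you invoke. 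So the plan is salvageable after correcting the sign of the degeneration and supplying these estimates, but in its present form the key analytic step fails.
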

\begin{proof}
A straightforward computation yields
\begin{eqnarray*}
C_n & = & \kappa_d^{-n+1+1/d}\int_{h_n^{-1}(0,1)}  Jh_n(x_1,\hdots,x_n)^{-1} {\cal H}^{nd-d-1}(d(x_1,\hdots,x_n))\\
& = & (n-1)d\kappa_d^{-n+1/d}\int_0^1\int_{B^d(0,1)}\int_{h_n^{-1}(y,r)}\!\!\!\!\!\!\!\!  Jh_n(x_1,\hdots,x_n)^{-1} {\cal H}^{nd-d-1}(d(x_1,\hdots,x_n))dy dr\\
&=& (n-1)d \kappa_d^{-n+1/d}\int_{(\R^d)^n}\1(h_n(x_1,\hdots,x_n)\in [0,1]\times B^d(0,1))dx_1\hdots dx_n<\infty.
\end{eqnarray*}
For almost all $(x_1,\hdots,x_n)\in(\R^d)^n$ at most $d+1$ points are on the boundary of the minimal ball that contains all points, and we assume that these are $x_1,\hdots,x_{d+1}$. Since the center of the minimal ball is in $B^d(0,1)$ and the radius is in $[0,1]$, these point must be in $B^d(0,2)$. The remaining points are in a ball with radius $1$ around a center given by the first $d+1$ points. These considerations lead to the bound
$$C_n \leq (n-1)d \kappa_d^{-n+1/d} {n \choose d+1} (\kappa_d 2^d)^{d+1}\kappa_d^{n-d-1}=\kappa_d^{-1/d}2^{d^2+d}{n \choose d+1}(n-1)$$
for $n\geq d+1$.
\end{proof}
Our main tool for the computation of the right hand side in (\ref{eqn:transformation2}) is the following inequality:
\begin{lemma}\label{lem:intfbar}
There are constants $c_{1,d},c_{2,d}>0$ only depending on the dimension $d$ such that
\begin{equation*}
\int_{\R^d} \overline{f}_n(r,y)^2 dy \leq \exp(-2\lambda\kappa_d r^d) \sum_{i=0}^{d-1}\kappa_{d-i}V_i(K) \left(\frac{c_{1,d}(\kappa_dr^d)^{1-\frac{i}{d}}}{((n-1)!)^2\lambda^2}+\frac{c_{2,d}(\kappa_dr^d)^{-\frac{i}{d}}}{(n!)^2\lambda^3}\right)
\end{equation*}
for all $r>0$ and $n\geq 2$.
\end{lemma}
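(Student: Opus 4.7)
The plan is to split the integration domain $\R^d$ according to the two cases in the definition of $\overline{f}_n$, namely the tube $T_r:=\{y\in\R^d:\rho(y,\partial K)\leq 8r\}$ about $\partial K$ and its complement, and to estimate the two contributions separately by invoking Steiner's formula for the outer parallel body of $K$ together with its analogue for the inner parallel body $K_{-t}:=\{y\in K:\rho(y,\partial K)\geq t\}$.

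On $T_r$ the integrand $\overline{f}_n(r,y)^2=\exp(-2\lambda\kappa_d r^d)/\bigl(((n-1)!)^2\lambda^2\bigr)$ is constant in $y$, so it suffices to bound $\Vol(T_r)$. The outer part $T_r\setminus K$ is contained in $\{z\in\R^d:\rho(z,K)\leq 8r\}\setminus K$, whose volume equals $\sum_{i=0}^{d-1}\kappa_{d-i}V_i(K)(8r)^{d-i}$ by Steiner's formula. The inner part $T_r\cap K$ is empty when $8r\geq r_K$; otherwise it can be bounded by an expression of the same form, using the classical fact that the surface areas of the convex inner parallel bodies $K_{-t}$ are dominated by $S(K)=2V_{d-1}(K)$. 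Rewriting $r^{d-i}=\kappa_d^{(i-d)/d}(\kappa_d r^d)^{(d-i)/d}$ and absorbing the factors of $8$ then produces the first summand of the claimed bound.

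On $\R^d\setminus T_r$ the constant bound $\exp(-\lambda\kappa_d r^d)$ would make the integral infinite, so I would work instead with the sharper estimate actually established inside the proof of Lemma \ref{lem:Boundfn2}, namely $\overline{f}_n(r,y)^2\leq \frac{4}{(n!)^2\lambda^2}\exp(-2\lambda\kappa_d\rho(y,\partial K)^d/8^d)$. Setting $W(t):=\Vol(\{y\in\R^d:\rho(y,\partial K)\leq t\})$ and applying the coarea formula (Theorem \ref{thm:coarea}) to the $1$-Lipschitz map $y\mapsto\rho(y,\partial K)$ reduces the integral on $\R^d\setminus T_r$ to $\int_{8r}^\infty \exp(-2\lambda\kappa_d t^d/8^d)W'(t)\,dt$. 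Since $W'(t)$ equals the total $\mathcal{H}^{d-1}$-measure of $\partial\{y:\rho(y,K)\leq t\}\cup\partial K_{-t}$, the same geometric input as above provides $W'(t)\leq\tilde c\sum_{i=0}^{d-1}\kappa_{d-i}V_i(K)\,t^{d-i-1}$.

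The remaining one-dimensional integrals $\int_{8r}^\infty t^{d-i-1}\exp(-2\lambda\kappa_d t^d/8^d)\,dt$ are handled by substituting $t=8rs$, factoring $\exp(-2\lambda\kappa_d r^d s^d)=\exp(-2\lambda\kappa_d r^d)\exp(-2\lambda\kappa_d r^d(s^d-1))$, substituting once more $u=s^d-1$, bounding $(1+u)^{-i/d}\leq 1$ for $u\geq 0$, and evaluating $\int_0^\infty e^{-2\lambda\kappa_d r^d u}du=(2\lambda\kappa_d r^d)^{-1}$. Collecting constants gives $C_{d,i}\lambda^{-1}(\kappa_d r^d)^{-i/d}\exp(-2\lambda\kappa_d r^d)$ for each summand, and multiplication by $4/((n!)^2\lambda^2)$ produces the second term of the claimed bound. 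The main obstacle I anticipate is the geometric step of dominating both $\Vol(T_r)$ and $W'(t)$ by a common $\sum_{i=0}^{d-1}\kappa_{d-i}V_i(K)\,t^{d-i-1}$-type expansion, which requires folding the contribution of the inner parallel body into the coefficients of the outer Steiner formula via the convexity of $K$; once this geometric input is in place the rest is essentially one-dimensional calculus.
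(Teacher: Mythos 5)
Your proposal is correct and takes essentially the same route as the paper: the same split of $\R^d$ at $\rho(y,\partial K)=8r$, the Steiner formula for the volume of the tube around $\partial K$, and the coarea formula for $y\mapsto\rho(y,\partial K)$ on the far region with the decaying bound $\frac{4}{(n!)^2\lambda^2}\exp(-2\lambda\kappa_d\rho(y,\partial K)^d/8^d)$, which is exactly what the paper's own proof uses (the constant-in-$y$ second case in the displayed definition of $\overline{f}_n$ is evidently a typo, as you noticed). The only slip is that for $8r\geq r_K$ the inner part $\{y\in K:\rho(y,\partial K)\leq 8r\}$ equals $K$ rather than being empty; but your estimate via the surface areas of the inner parallel bodies (all bounded by $S(K)=2V_{d-1}(K)$) is valid for every $r$, so this does not affect the argument.
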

\begin{proof}
Let $(\partial K)_s=\{y\in \R^d: \rho(\partial K, y)\leq s\}$. Together with (\ref{eqn:Boundfn1}) and (\ref{eqn:Boundfn2}), we obtain 
\begin{eqnarray}\label{eqn:boundtwoparts}
\int_{\R^d} \overline{f}_n(r,y)^2 dy & \leq & \int_{(\partial K)_{8r}}\frac{1}{((n-1)!)^2\lambda^2}\exp(-2\lambda\kappa_d r^d) dy\\ && +\int_{\R^d\setminus (\partial K)_{8r}}\frac{4}{(n!)^2\lambda^2}\exp(-2\lambda\kappa_d \rho(y,\partial K)^d/8^d) dy \notag.
\end{eqnarray}
The volume of $(\partial K)_{8r}\cap K^C$ is given by the Steiner formula (see Theorem 2.2.4 in \cite{SchneiderWeil1992}) and $\Vol((\partial K)_{8r}\cap K)\leq \Vol((\partial K)_{8r}\cap K^C)$ such that 
\begin{eqnarray*}
&&\int_{(\partial K)_{8r}}\frac{1}{((n-1)!)^2\lambda^2}\exp(-2\lambda\kappa_d r^d) dy\\ & \leq & 2\frac{1}{((n-1)!)^2\lambda^2}\exp(-2\lambda\kappa_d r^d) \sum_{i=0}^{d-1}\kappa_{d-i} V_{i}(K)(8r)^{d-i}\notag\\ & \leq & \frac{c_{1,d}}{((n-1)!)^2\lambda^2} \exp(-2\lambda\kappa_d r^d)\sum_{i=0}^{d-1}\kappa_{d-i} V_{i}(K)(\kappa_dr^d)^{1-\frac{i}{d}}. \notag
\end{eqnarray*}
To the second expression in (\ref{eqn:boundtwoparts}) we apply the coarea formula with the Lipschitz-function $u:\R^d\rightarrow\R, x\mapsto \rho(x,\partial K)$. 
\begin{eqnarray*}
&& \int_{\R^d\setminus (\partial K)_{8r}}\frac{4}{(n!)^2\lambda^2}\exp(-2\lambda\kappa_d \rho(y,\partial K)^d/8^d) dy\\ & = & \int_{8r}^\infty \int_{u^{-1}(\delta)} \frac{4}{(n!)^2\lambda^2}\exp(-2\lambda\kappa_d \delta^d/8^d) ||\nabla u(y)||^{-1} {\cal H}^{d-1}(dy) d\delta.
\end{eqnarray*}
It is easy to see that $|\nabla_{v(x)} u(x)|=||v(x)||$, where $\nabla_{v(x)} u(x)$ is the directional derivative in direction $v(x)=x-\proj_K(x)$. Hence, we have $||v(x)||=|\nabla_{v(x)}u(x)|\leq ||\nabla u(x)||\ ||v(x)||$ and $||\nabla u(x)||\geq 1$. By the Steiner formula (see Theorem 2.2.4 in \cite{SchneiderWeil1992}), we know that
\begin{eqnarray*}
{\cal H}^{d-1}(\{z\in\R^d: \rho(z,\partial K)=\delta\}) & \leq & 2{\cal H}^{d-1}(\{z\in K^C: \rho(z,\partial K)=\delta\})\\ &=& \sum_{i=0}^{d-1}(d-i)\kappa_{d-i}V_i(K)\delta^{d-1-i}
\end{eqnarray*}
and altogether we obtain
\begin{eqnarray*}
&&\int_{\R^d\setminus (\partial K)_{8r}}\frac{4}{(n!)^2\lambda^2}\exp(-2\lambda\kappa_d \rho(y,\partial K)^d/8^d) dy\\ & \leq & \int_{8r}^\infty \frac{4}{(n!)^2\lambda^2}\exp(-2\lambda\kappa_d \delta^d/8^d) \sum_{i=0}^{d-1} (d-i)\kappa_{d-i}V_i(K)\delta^{d-1-i} d\delta\\
& \leq & \frac{4}{(n!)^2\lambda^2} \sum_{i=0}^{d-1} (d-i)\kappa_{d-i}V_i(K)(8r)^{-i}\int_{8r}^\infty \exp(-2\lambda\kappa_d \delta^d/8^d)\delta^{d-1}d\delta \\
&=& 2\frac{8^d}{(n!)^2\lambda^3} \exp(-2\lambda\kappa_d r^d) \sum_{i=0}^{d-1}(d-i)\kappa_{d-i}V_i(K)(8r)^{-i}\\ &\leq&\frac{c_{2,d}}{(n!)^2\lambda^3} \exp(-2\lambda\kappa_d r^d) \sum_{i=0}^{d-1}\kappa_{d-i}V_i(K)(\kappa_dr^d)^{-\frac{i}{d}}.
\end{eqnarray*}
\end{proof}
By $h_1(x_1)=(0,x_1)$, Lemma \ref{lem:Boundfn2} and the coarea formula with the same function $u$ as in the previous proof, it follows that
\begin{eqnarray}\label{eqn:upperboundf1}
||f_1||_1^2 & \leq& \lambda\int_{\R^d} \frac{4}{\lambda^2}\exp(-2\lambda\kappa_d\rho(y,\partial K)^d/8^d) dy\\
&=& \frac{4}{\lambda}\int_0^\infty \int_{u^{-1}(r)} \exp(-2\lambda\kappa_d r^d/8^d) ||\nabla u(y)||^{-1}{\cal H}^{d-1}(dy)dr\notag\\
&\leq& \frac{4}{\lambda} \int_0^\infty \exp(-2\lambda\kappa_d r^d/8^d) \sum_{i=0}^{d-1} (d-i)\kappa_{d-i}V_i(K)r^{d-1-i} dr.\notag
\end{eqnarray}
Combining (\ref{eqn:transformation2}) and Lemma \ref{lem:intfbar}, we have
\begin{eqnarray*}
||f_n||_n^2 & \leq & C_n \int_0^\infty \exp(-2\lambda\kappa_d r^d) \sum_{i=0}^{d-1}\kappa_{d-i}V_i(K) \frac{c_{1,d}\lambda^{-2+\frac{i+1}{d}}}{((n-1)!)^2}(\lambda\kappa_dr^d)^{n-\frac{i+1}{d}} dr\\
&&+ C_n \int_0^\infty \exp(-2\lambda\kappa_d r^d) \sum_{i=0}^{d-1}\kappa_{d-i}V_i(K)  \frac{c_{2,d}\lambda^{-2+\frac{i+1}{d}}}{n!}(\lambda\kappa_dr^d)^{n-1-\frac{i+1}{d}}dr.
\end{eqnarray*}
for $n\geq 2$. Comparing this with (\ref{eqn:upperboundf1}), we see that the same bound also holds for $n=1$ if the constant $C_1$ is chosen appropriately. Now substitution and the definition of the Gamma function lead to
\begin{eqnarray*}
||f_n||_n^2 & \leq & C_n \frac{c_{1,d}}{((n-1)!)^2}\sum_{i=0}^{d-1}\kappa_{d-i}V_i(K)\frac{\lambda^{-2+\frac{i+1}{d}}}{2^{n-\frac{i+1}{d}}}\frac{1}{(2\lambda\kappa_d)^{\frac{1}{d}}}\int_0^\infty \exp(-y) y^{n-1-\frac{i}{d}}dy\\
&&+C_n \frac{c_{2,d}}{(n!)^2}\sum_{i=0}^{d-1}\kappa_{d-i}V_i(K)\frac{\lambda^{-2+\frac{i+1}{d}}}{2^{n-1-\frac{i+1}{d}}}\frac{1}{(2\lambda\kappa_d)^{\frac{1}{d}}}\int_0^\infty \exp(-y) y^{n-2-\frac{i}{d}}dy\\
&=& C_n \sum_{i=0}^{d-1} \kappa_{d-i}V_i(K)\left(\frac{\tilde{c}_{1,d}\Gamma(n-\frac{i}{d})}{((n-1)!)^2 2^n}+\frac{c_{2,d}\Gamma(n-1-\frac{i}{d})}{(n!)^2 2^n}\right)\lambda^{-2+\frac{i}{d}}
\end{eqnarray*}
with constants $\tilde{c}_{1,d},\tilde{c}_{2,d}>0$ and it is easy to see that
\begin{equation}\label{eqn:boundnormfn}
n! ||f_n||_n^2 \leq \frac{1}{2^{n}}C_n \left(\tilde{c}_{1,d}n+\tilde{c}_{2,d}\frac{1}{n(n-1)}\right)\sum_{i=0}^{d-1} \kappa_{d-i}V_i(K) \lambda^{-2+\frac{i}{d}}.
\end{equation}
By Lemma \ref{lem:boundCn}, we know that $C_n$ is bounded by a polynomial of order $d+2$ in $n$ and it follows directly that the series
$$\sum_{n=1}^\infty\frac{1}{2^{n}}C_n \left(\tilde{c}_{1,d}n+\tilde{c}_{2,d}\frac{1}{n(n-1)}\right)$$
converges, which proves the upper bound in Theorem \ref{thm:variance} and that the condition (\ref{eqn:conditionkernels}) in Theorem \ref{thm:abstractCLT} is satisfied for the Poisson-Voronoi approximation.

In order to conclude the proof of Theorem \ref{thm:variance}, it remains to construct a lower bound. Because of (\ref{eqn:chaosvariance}) it is sufficient to give a lower bound for $||f_1||_1^2$.
\begin{lemma}\label{lem:lowerf1}
There is a constant $\underline{C}$ only depending on the dimension $d$ such that
\begin{equation}\label{eqn:LowerBoundVariance}
||f_1||_1^2 \geq \underline{C} \kappa_1 V_{d-1}(K)\lambda^{-1-\frac{1}{d}}
\end{equation}
for $\lambda\geq (2/r_K)^d$, where $r_K$ is the inradius of $K$.
\end{lemma}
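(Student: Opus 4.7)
The plan is to bound $\|f_1\|_1^2 = \lambda\int_{\R^d} f_1(x)^2\,dx$ from below by restricting the integral to a thin strip of points $x$ inside $K$ at distance of order $\lambda^{-1/d}$ from $\partial K$, and establishing on this strip a uniform pointwise lower bound $f_1(x) \geq c\lambda^{-1}$. Fix a small constant $\gamma \in (0,1/10)$ depending only on $d$, and put
\[
B_\lambda := \{x \in K : \gamma\lambda^{-1/d} \leq \rho(x,\partial K) \leq 2\gamma\lambda^{-1/d}\}.
\]
The hypothesis $\lambda \geq (2/r_K)^d$ gives $2\gamma\lambda^{-1/d} < r_K$, so both inner parallel bodies appearing in the definition of $B_\lambda$ have non-empty interior, and classical convex-geometric estimates comparing the surface areas of the inner parallel bodies with $V_{d-1}(K)$ yield $|B_\lambda| \geq c_1 V_{d-1}(K)\lambda^{-1/d}$ for some $c_1 = c_1(d,\gamma) > 0$.

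For the pointwise bound, fix $x \in B_\lambda$, write $\delta := \rho(x,\partial K) \in [\gamma,2\gamma]\lambda^{-1/d}$, let $p$ be a closest point of $\partial K$ to $x$, and let $\nu$ be an outer unit normal to $K$ at $p$, so $x = p - \delta\nu$ and $K \subseteq \{y : y\cdot\nu \leq p\cdot\nu\}$ by convexity. Using the remark after Lemma \ref{lem:kernels}, $f_1(x) = \E|A_x|$ with $A_x = \{z : \rho(z,x) \leq \rho(z,\eta\cap K^C) \leq \rho(z,\eta\cap K)\}$, and I lower-bound $\E|A_x| = \int_{\R^d}\P(z\in A_x)\,dz$ by restricting to the cylinder $S_x := \{p + a\nu + w : a \in [\gamma\lambda^{-1/d}, 2\gamma\lambda^{-1/d}],\ w\perp\nu,\ |w| \leq \gamma\lambda^{-1/d}\}$, which lies in $K^C$ by convexity, has Lebesgue volume $\kappa_{d-1}\gamma^d\lambda^{-1}$, and satisfies $\rho(z,x) \in [2\gamma,5\gamma]\lambda^{-1/d}$ for every $z \in S_x$. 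For such a $z$ set $r := \rho(z,x)$ and $\varepsilon := \gamma\lambda^{-1/d}$, and consider the three events $E_1 = \{\eta(B(z,r+\varepsilon)\cap K) = 0\}$, $E_2 = \{\eta(B(z,r)\cap K^C) = 0\}$, and $E_3 = \{\eta((B(z,r+\varepsilon)\setminus B(z,r))\cap K^C) \geq 1\}$. These are supported on pairwise disjoint sets in $\R^d$, hence independent; and on $E_1\cap E_2\cap E_3$ the nearest $\eta$-point to $z$ lies in $K^C$ at distance in $[r,r+\varepsilon]$ while no $K$-point of $\eta$ lies within distance $r+\varepsilon$, so $\rho(z,x) = r \leq \rho(z,\eta\cap K^C) \leq r+\varepsilon < \rho(z,\eta\cap K)$, i.e.\ $z \in A_x$. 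Because $|B(z,r+\varepsilon)\cap K|, |B(z,r)\cap K^C| \leq \kappa_d(6\gamma)^d\lambda^{-1}$, one has $\P(E_1), \P(E_2) \geq \exp(-\kappa_d(6\gamma)^d)$; and since convexity places the upper-hemisphere annulus $\{z+sw : s\in[r,r+\varepsilon],\ w\cdot\nu\geq 0,\ |w|=1\}$ inside $K^C$ with $d$-dimensional volume at least $(d/2)\kappa_d r^{d-1}\varepsilon \geq (d/2)\kappa_d(2\gamma)^{d-1}\gamma\lambda^{-1}$, one also gets $\P(E_3) \geq 1 - \exp(-(d/2)\kappa_d(2\gamma)^{d-1}\gamma)$.

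By independence $\P(z\in A_x) \geq c_2 > 0$ uniformly over $x\in B_\lambda$ and $z\in S_x$, so $f_1(x) \geq c_2|S_x| \geq c_3\lambda^{-1}$, and plugging back yields
\[
\|f_1\|_1^2 = \lambda\int_{\R^d} f_1(x)^2\,dx \;\geq\; \lambda\cdot c_3^2\lambda^{-2}\cdot|B_\lambda| \;\geq\; c_1 c_3^2\, V_{d-1}(K)\,\lambda^{-1-1/d},
\]
which is (\ref{eqn:LowerBoundVariance}) with $\underline{C} = c_1 c_3^2/\kappa_1$. The main obstacle is the convex-geometric estimate $|B_\lambda| \geq c_1 V_{d-1}(K)\lambda^{-1/d}$: it must hold with a constant depending only on $d$ (through $\gamma$) and with dependence on $K$ only through the surface functional $V_{d-1}(K)$, and the hypothesis $\lambda \geq (2/r_K)^d$ is exactly what prevents the inner shell from degenerating. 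Once this geometric ingredient is in hand, the Poisson-process computations above are routine.
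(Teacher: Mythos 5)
Your probabilistic construction is sound and complete as stated: for $x$ in the inner shell $B_\lambda$ the three events $E_1,E_2,E_3$ are carried by pairwise disjoint regions and hence independent, each has probability bounded below by a constant depending only on $d$ and $\gamma$ (the supporting hyperplane at $p$ does put both the cylinder $S_x$ and the outer half-annulus around $z$ in $K^C$, and $(r+\varepsilon)^d-r^d\geq dr^{d-1}\varepsilon$ gives the stated bound for $E_3$), and on $E_1\cap E_2\cap E_3$ one indeed has $\rho(z,x)\leq\rho(z,\eta\cap K^C)\leq\rho(z,\eta\cap K)$, so $f_1(x)\geq c_2|S_x|\geq c_3\lambda^{-1}$. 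This is a genuinely different geometric set-up from the paper, which instead takes $x\in K^C$ near $\partial K$, uses the event that $\eta$ puts a point in a prescribed subset of $(B^d(x,2\varepsilon)\setminus B^d(x,\varepsilon))\cap K$ and no point elsewhere in $B^d(x,4\varepsilon)$, and needs the rolling-ball Lemma 4 of \cite{SchuettWerner1990} to guarantee enough such $x$.

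However, the step you yourself call the main obstacle is a genuine gap: the bound $|B_\lambda|\geq c_1V_{d-1}(K)\lambda^{-1/d}$ is asserted with neither proof nor a precise reference, and it is exactly the point where the nontrivial convex-geometric input enters (in the paper this role is played by $\Vol(M_\varepsilon)\geq 2^{-d}\kappa_1V_{d-1}(K)\varepsilon$, obtained from the Sch\"utt--Werner estimate, and it is the reason for the hypothesis $\lambda\geq(2/r_K)^d$). The claim is true and can be filled in a few lines, arguably more easily than the paper's outer-shell analogue: with $t=\lambda^{-1/d}$ and $K_{-u}=\{x\in K:\rho(x,\partial K)\geq u\}$, the coarea formula applied to the $1$-Lipschitz function $\rho(\cdot,\partial K)$ gives $|B_\lambda|\geq\int_{\gamma t}^{2\gamma t}\mathcal{H}^{d-1}(\partial K_{-u})\,du$; if $B^d(c,r_K)\subseteq K$, then by convexity $K_{-u}\supseteq(1-u/r_K)K+(u/r_K)c$, so monotonicity of surface area under inclusion of convex bodies yields $\mathcal{H}^{d-1}(\partial K_{-u})\geq(1-u/r_K)^{d-1}\kappa_1V_{d-1}(K)$; since $u\leq 2\gamma t\leq\gamma r_K$ by the hypothesis on $\lambda$, this gives $|B_\lambda|\geq\gamma(1-\gamma)^{d-1}\kappa_1V_{d-1}(K)\lambda^{-1/d}$. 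Until such an argument (or an explicit citation) is supplied your proposal does not prove the lemma; with it, your inside-the-body route is a correct alternative to the paper's construction.
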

\begin{proof}
Recall that $B^d(y,\delta)\subset\R^d$ stands for a ball with center $y$ and radius $\delta>0$.
We consider the set $$M_{\varepsilon}=\left\{x\in K^{C}: \rho(x,K)\leq \varepsilon, \Vol((B^d(x,2\varepsilon)\setminus B^d(x,\varepsilon))\cap K)\geq \frac{\kappa_d \varepsilon^d}{2^d} \right\}$$
for $\varepsilon\leq r_K/2$. By Lemma 4 in \cite{SchuettWerner1990}, it is known that
$${\cal H}^{d-1}\left(\left\{x\in\partial K: r(x)\geq \varepsilon\right\}\right)\geq \left(1-\frac{\varepsilon}{r_K}\right)^{d-1}\kappa_1V_{d-1}(K),$$
where $r(x)$ is the radius of the largest ball that is contained in $K$ and contains $x$. It is easy to see that $x\in K^C$ with $\rho(x,K)\leq \varepsilon$ is in $M_{\varepsilon}$ if $r(\proj_K(x))\geq \varepsilon$. As a consequence, we have
\begin{eqnarray}\label{eqn:lowerBoundMepsilon}
\Vol(M_{\varepsilon}) &\geq & {\cal H}^{d-1}\left(\left\{x\in\partial K: r(x)\geq\varepsilon\right\}\right)\varepsilon\\ &\geq & \left(1-\frac{\varepsilon}{r_K}\right)^{d-1}\kappa_1V_{d-1}(K)\varepsilon\geq \frac{1}{2^d}\kappa_1 V_{d-1}(K)\varepsilon.\notag
\end{eqnarray}
For $x\in M_{\varepsilon}$ it holds
\begin{equation}\label{eqn:lowerboundf1}
|f_1(x)|\geq \frac{\kappa_d\varepsilon^d}{2^d} \exp(-\left(4^d -2^{-d}\right) \lambda\kappa_d\varepsilon^d)\left(1-\exp(-2^{-d}\ \lambda\kappa_d\varepsilon^d)\right).
\end{equation}
To see (\ref{eqn:lowerboundf1}), the underlying idea is that for every $x\in M_{\varepsilon}$ there is, by definition of $M_{\varepsilon}$, a set $U\subset (B^d(x,2\varepsilon)\setminus B^d(x,\varepsilon))\cap K$ with $\ell_d(U)=2^{-d}\kappa_d\varepsilon^d$. Then
$$\P(\eta(B^d(x,4\varepsilon)\setminus U)=0, \eta(U)\geq 1)=\exp(-(4^d-2^{-d})\lambda\kappa_d\varepsilon^d)\left(1-\exp(-2^{-d}\ \lambda\kappa_d\varepsilon^d)\right)$$
and for this event the effect of adding $x$ to the point process is larger than $\frac{\kappa_d\varepsilon^d}{2^d}$.
Combining (\ref{eqn:lowerBoundMepsilon}) and (\ref{eqn:lowerboundf1}), we obtain
\begin{eqnarray*}
||f_1||_1^2 &=&\lambda\int_{\R^d} f_1(x)^2 dx\geq \lambda\int_{M_{\varepsilon}} f_1(x)^2 dx\\
&\geq& \lambda \frac{1}{2^d}  \kappa_1V_{d-1}(K)\varepsilon\ \frac{\kappa_d^2\varepsilon^{2d}}{4^d} \exp(-2(4^d-2^{-d})\lambda\kappa_d\varepsilon^d)\left(1-\exp(2^{-d}\ \lambda\kappa_d\varepsilon^d)\right)^2
\end{eqnarray*}
and the choice $\varepsilon=\lambda^{-\frac{1}{d}}$ leads to
$$||f_1||_1^2 \geq \frac{\kappa_d^2}{8^d} \exp(-2(4^d-2^{-d})\kappa_d)\left(1-\exp(-2^{-d}\ \kappa_d)\right)^2 \kappa_1 V_{d-1}(K)\lambda^{-1-\frac{1}{d}}.$$
\end{proof}
\begin{remark}\rm
An inequality as (\ref{eqn:LowerBoundVariance}) cannot hold for all $\lambda>0$ as the following consideration shows:\\
We fix a compact convex set $K_0$ with $0\in K$ and a compact window $W\supset K$ and set $K_r=rK=\{r x: x\in K_0 \}$ for $r>0$. We define the random variable $\widetilde{\PV}(W)$ as
$$\widetilde{\PV}(W)=\Vol\left(\left\{y\in \R^d: \rho(y,W)\leq||y-x||\ \forall x\in \eta\cap W^C\right\}\right).$$
A short computation proves $\E\ \widetilde{\PV}(W)^2<\infty$. Then, it holds
\begin{eqnarray*}
\V \PV(K_r) &=& \E\PV(K_r)^2-\Vol(K_r)^2\leq \E\PV(K_r)^2\\ &\leq& \left(1-\exp(-\lambda \Vol(K_r))\right)\E\ \widetilde{\PV}(W)^2.
\end{eqnarray*}
For $r\rightarrow 0$, the right hand side has order $r^d$, whereas $V_{d-1}(K_r)$ is only of order $r^{d-1}$.
\end{remark}

\section{Proof of Theorem \ref{thm:CLT}} \label{sec:CLTPV}

In this section, we use our abstract central limit theorem Theorem \ref{thm:abstractCLT} to prove Theorem \ref{thm:CLT}. Since it follows by (\ref{eqn:boundnormfn}) that the condition (\ref{eqn:conditionkernels}) is satisfied, it remains only to check (\ref{eqn:conditionRs}), which requires
$$\frac{\sqrt{R_{ij}}}{\V \PV(K)},\frac{\sqrt{\tilde{R}_i}}{\V \PV(K)}\rightarrow 0 \quad \text{ as } \lambda\rightarrow \infty.$$ 
We show that for every $\pi\in \overline{\Pi}_{i,j}$
$$M_{\lambda}=\lambda^{|\pi|}\int_{\R^d}\hdots\int_{\R^d}|(f_i*f_i*f_j*f_j)_{\pi}(y_1,\hdots,y_{|\pi|})|dy_1\hdots dy_{|\pi|}$$
converges to zero as $\lambda\rightarrow\infty$ at a sufficiently high rate such that the inequalities (\ref{eqn:boundRij}) and (\ref{eqn:boundRi}) in Proposition \ref{prop:product} imply that condition (\ref{eqn:conditionRs}) is satisfied.

We define functions $g_n: \left(\R^d\right)^n\rightarrow\R, n\in\N$, as
$$g_n(x_1,\hdots,x_n)=\max\left\{\diam(x_1,\hdots,x_n),\max_{i=1,\hdots,n}\rho(x_i,\partial K)\right\},$$ 
where $\diam(x_1,\hdots,x_n)$ stands for the diameter of $x_1,\hdots,x_n$. Using this notation, we can state the following upper bound for $f_n$:

\begin{lemma}\label{lem:boundfng}
Let $x_1,\hdots,x_n\in\R^d$ and $\delta=g_n(x_1,\hdots,x_n)$. Then
\begin{equation*}
|f_n(x_1,\hdots,x_n)|\leq \frac{2}{n!\lambda} \exp(-\lambda\kappa_d\delta^d/4^d)=:\tilde{f}_n(\delta).
\end{equation*}
\end{lemma}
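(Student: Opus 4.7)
The plan is to prove the bound by splitting into two cases according to which of the two quantities in the definition of $g_n$ attains the maximum $\delta$, and in each case applying the kernel formula (\ref{eqn:kernels}) together with an ad hoc two-piece decomposition of $\R^d$ adapted to the geometry. In both cases the probability appearing in (\ref{eqn:kernels}) is controlled either by $\exp(-\lambda\kappa_d\rho(y,\partial K)^d)$ (when $y$ is on the right side of $\partial K$) or by $\exp(-\lambda\kappa_d\|y-\overline{x}(y)\|^d)$, and everything reduces to a standard polar coordinate computation of the form $\int_{\|y-x\|\geq R}\exp(-\lambda\kappa_d\|y-x\|^d)dy=\lambda^{-1}\exp(-\lambda\kappa_d R^d)$.

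\textbf{Case 1:} $\delta=\max_i\rho(x_i,\partial K)$. After renaming assume $\rho(x_1,\partial K)=\delta$ and (WLOG) $x_1\in K$. Because $K$ is convex and $\diam(x_1,\ldots,x_n)\leq\delta$, the ball $B^d(x_1,\delta)$ lies entirely in $K$ and contains every $x_j$; in particular $\overline{x}(y)\in K$ for all $y$, so only the second integral in (\ref{eqn:kernels}) contributes. I would then split the integration domain as $B^d(x_1,\delta/4)$ together with its complement. On the inner ball $\rho(y,\partial K)\geq 3\delta/4$, so the integrand is at most $\exp(-\lambda\kappa_d(3\delta/4)^d)$, contributing $\kappa_d(\delta/4)^d\exp(-\lambda\kappa_d(3\delta/4)^d)$. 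On the complement $\|y-\overline{x}(y)\|\geq\|y-x_1\|\geq\delta/4$, giving $\lambda^{-1}\exp(-\lambda\kappa_d(\delta/4)^d)$. An elementary estimate of the form $t\leq\exp(ct)$ then absorbs the first piece into a term of the same form as the second, yielding $\frac{2}{n!\lambda}\exp(-\lambda\kappa_d\delta^d/4^d)$.

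\textbf{Case 2:} $\delta=\diam(x_1,\ldots,x_n)$, realized by a pair $\|x_1-x_2\|=\delta$. Here I start from the pointwise bound $|f_n|\leq\frac{1}{n!}\int_{\R^d}\exp(-\lambda\kappa_d\|y-\overline{x}(y)\|^d)dy$ following from (\ref{eqn:kernels}), and split $\R^d$ by the perpendicular bisector of $x_1x_2$ into the two halfspaces $H_1=\{\|y-x_1\|\geq\|y-x_2\|\}$ and $H_2=\{\|y-x_2\|\geq\|y-x_1\|\}$. The key geometric observation is that on $H_1$ the triangle inequality $\|y-x_1\|+\|y-x_2\|\geq\delta$ combined with $\|y-x_1\|\geq\|y-x_2\|$ forces $\|y-x_1\|\geq\delta/2$, and symmetrically on $H_2$. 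Using $\|y-\overline{x}(y)\|\geq\|y-x_i\|$ on $H_i$ and the polar estimate, each halfspace contributes at most $\lambda^{-1}\exp(-\lambda\kappa_d(\delta/2)^d)$, so $|f_n|\leq\frac{2}{n!\lambda}\exp(-\lambda\kappa_d\delta^d/2^d)$, which is weakly dominated by the required $\frac{2}{n!\lambda}\exp(-\lambda\kappa_d\delta^d/4^d)$ because $2^d\leq 4^d$.

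The main obstacle is obtaining the prefactor $\frac{2}{n!\lambda}$ rather than the $\frac{1}{(n-1)!\lambda}$ produced by the $n$-fold covering $\R^d=\bigcup_i(\R^d\setminus B^d(x_i,r))$ used in the proof of Lemma \ref{lem:Boundfn1}. The essential idea is to replace that $n$-fold covering by a two-piece decomposition — \emph{near $x_1$ vs.\ away from $x_1$} when the configuration sits deep in $K$ or $K^c$, and the perpendicular bisector of a diameter-realizing pair when the configuration has large diameter. The weaker constant $4^d$ (as opposed to the $2^d$ sufficient in Case 2 alone, or the $8^d$ of Lemma \ref{lem:Boundfn2}) is precisely what unifies the two cases into a single clean statement.
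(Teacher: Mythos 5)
Your proof is correct, and it follows the paper's overall strategy (case distinction according to which term in $g_n$ attains $\delta$, the kernel bound from Lemma \ref{lem:kernels}, void probabilities of the form $\exp(-\lambda\kappa_d r^d)$, and the exact polar-coordinate tail integral $\int_{\|y-x\|\geq R}\exp(-\lambda\kappa_d\|y-x\|^d)dy=\lambda^{-1}\exp(-\lambda\kappa_d R^d)$), but your two decompositions differ from the paper's. The paper uses in both cases the same split of $\R^d$ into $B^d(x_1,\delta/4)$ and its complement and keeps the inner piece as a tail integral: in the boundary-distance case it compares $\rho(y,\partial K)$ with the distance to the auxiliary point $\tilde{x}=\tfrac12(x_1+\proj_{\partial K}(x_1))$ and integrates over $\R^d\setminus B^d(\tilde{x},\delta/4)$, and in the diameter case $x_2$ simply plays the role of $\tilde{x}$; thus each piece equals $\frac{1}{n!\lambda}\exp(-\lambda\kappa_d\delta^d/4^d)$ with no further inequality needed. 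Your substitutes work: in Case 1 the volume-times-supremum bound $\kappa_d(\delta/4)^d\exp(-\lambda\kappa_d(3\delta/4)^d)$ is indeed absorbed into $\lambda^{-1}\exp(-\lambda\kappa_d(\delta/4)^d)$, since with $t=\lambda\kappa_d(\delta/4)^d$ this amounts to $t\leq e^{(3^d-1)t}$, valid for all $\lambda,\delta>0$; and in Case 2 the perpendicular-bisector split even gives the sharper constant $2^d$ before weakening to $4^d$. Two details are worth making explicit in a written version: the control of the probability in (\ref{eqn:kernels}) by $\exp(-\lambda\kappa_d\rho(y,\partial K)^d)$ uses that, for $y\in K$ and $\overline{x}(y)\in K$, the event forces $z(y,\eta)\notin K$ and hence $\eta\cap B^d(y,\rho(y,\partial K))=\emptyset$; and the reduction ``WLOG $x_1\in K$'' should be accompanied by the mirror argument for $x_1\in K^C$ (then all $x_i$ lie in the closure of $K^C$, $y\in B^d(x_1,\delta/4)$ satisfies $\rho(y,K)\geq 3\delta/4$, and the same supremum bound holds pointwise on the inner ball, also in the degenerate situation where some $x_j\in\partial K$ makes the second integral contribute, because then $\|y-\overline{x}(y)\|\geq\rho(y,K)$); the paper sidesteps this by phrasing everything through $\tilde{x}$ and $\rho(y,\partial K)$.
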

\begin{proof}
Without loss of generality we can assume $\rho(x_1,\partial K)=\delta$ or $\rho(x_1,x_2)=\delta$. For the first case, let $\tilde{x}=\frac{1}{2}(x_1+\proj_{\partial K}(x_1))$, where $\proj_{\partial K}(x_1)$ is the projection of $x_1$ on the boundary of $K$. If the projection is not unique (this can happen for $x_1\in K$), it does not matter which of the points is taken. Then, it holds
$$\frac{\delta}{4} \leq \rho(y,\tilde{x})\leq\frac{3}{4}\delta \leq \rho(y,\partial K)\ \text{ for all }y\in B^d(x_1,\delta/4).$$
Hence, it follows from Lemma \ref{lem:kernels} and a straight forward computation as in the proof of Lemma \ref{lem:Boundfn1} that
\begin{eqnarray*}
|f_n(x_1,\hdots,x_n)| & \leq & \frac{1}{n!} \int_{\R^d\setminus B^d(x_1,\delta/4)} \P(z(y,\eta)\notin B^d(y,||y-x_1||))dy\\ &&+\frac{1}{n!}\int_{B^d(x_1,\delta/4)}\P(z(y,\eta)\notin B^d(y,\rho(y,\partial K)||))dy\\
& \leq & \frac{1}{n!} \int_{\R^d\setminus B^d(x_1,\delta/4)} \P(z(y,\eta)\notin B^d(y,||y-x_1||))dy\\ &&+\frac{1}{n!}\int_{\R^d\setminus B^d(\tilde{x},\delta/4)}\P(z(y,\eta)\notin B^d(y,||y-\tilde{x}||))dy\\
&=& \frac{2}{n!\lambda} \exp(-\lambda\kappa_d\delta^d/4^d).
\end{eqnarray*}
In the case $\rho(x_1,x_2)=\delta$, we replace $\tilde{x}$ by $x_2$ and obtain the same bound.
\end{proof}\\
We prepare the application of the coarea formula by showing the following properties of $g_n:$

\begin{lemma}\label{lem:gn}
\begin{itemize}
 \item [a)] $g_n$ is a Lipschitz function with $||\nabla g_n||\geq 1$ almost everywhere.
\item [b)] There is a constant $c_d>0$ only depending on the dimension $d$ such that
\begin{eqnarray}\label{eqn:Hgn}
{\cal H}^{nd-1}(g_n^{-1}(\delta)) & \leq & \sum_{i=0}^{d-1}\kappa_{d-i}V_i(K)\delta^{d-i}d\kappa_d\delta^{d-1}(\kappa_d\delta^d)^{n-2}\\ &&+n\sum_{i=0}^{d-1}(d-i)\kappa_{d-i}V_i(K)\delta^{d-1-i}(\kappa_d\delta^d)^{n-1}\notag\\ &\leq& c_d n\sum_{i=0}^{d-1} \kappa_{d-i}V_i(K)(\kappa_d\delta^d)^{n-\frac{i+1}{d}}.\notag
 \end{eqnarray}
\end{itemize}
\end{lemma}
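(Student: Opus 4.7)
For part (a), note that $g_n$ is the pointwise maximum of the $\binom{n}{2}$ pairwise distances $(x_1,\ldots,x_n)\mapsto\|x_i-x_j\|$ and the $n$ boundary distances $(x_1,\ldots,x_n)\mapsto\rho(x_k,\partial K)$ on $(\R^d)^n$. Each of these is $1$- or $\sqrt{2}$-Lipschitz, so $g_n$ is Lipschitz and differentiable $\ell_{nd}$-almost everywhere by Rademacher's theorem. At any point $x$ of differentiability of $g_n$, if $f$ is a constituent realising the maximum and is itself differentiable at $x$, then $g_n-f\geq 0$ locally with equality at $x$, forcing $\nabla g_n(x)=\nabla f(x)$. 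A boundary distance has gradient of Euclidean norm $1$ at almost every point (its gradient away from the medial axis of $\partial K$ is a unit vector), while a pairwise distance has gradient of norm $\sqrt{2}$ (one unit block in the $x_k$ coordinate and one in the $x_l$ coordinate); hence $\|\nabla g_n\|\geq 1$ a.e.

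For part (b), the plan is to decompose $g_n^{-1}(\delta)=A\cup B$, where $A$ is the subset on which the diameter realises the value $\delta$ and $B$ is the subset on which some boundary distance realises $\delta$, and to bound the Hausdorff measure of each piece by a product-measure argument. For $B$, write $B=\bigcup_{k=1}^n B_k$ with $B_k=\{\rho(x_k,\partial K)=\delta\}\cap g_n^{-1}(\delta)$. On $B_k$ the coordinate $x_k$ is constrained to the $(d-1)$-dimensional level surface $\{z\in\R^d:\rho(z,\partial K)=\delta\}$, whose $\mathcal{H}^{d-1}$-measure is bounded by $\sum_{i=0}^{d-1}(d-i)\kappa_{d-i}V_i(K)\delta^{d-1-i}$ via the Steiner formula (exactly the estimate used already in the proof of Lemma~\ref{lem:intfbar}), while the remaining $n-1$ coordinates are confined to $B^d(x_k,\delta)$, contributing a volume factor $(\kappa_d\delta^d)^{n-1}$. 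A Fubini-style inequality for $\mathcal{H}^{nd-1}$ of a product set then bounds $\mathcal{H}^{nd-1}(B_k)$ by the product of these two quantities, and summation over $k$ yields the second summand in~\eqref{eqn:Hgn}.

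For $A$, after relabelling we may assume that $\|x_1-x_2\|=\delta$ realises the diameter. Since $g_n=\delta$, each $\rho(x_i,\partial K)\leq\delta$, so in particular $x_1\in(\partial K)_\delta$, a set whose volume is bounded by the Steiner formula by $\sum_{i=0}^{d-1}\kappa_{d-i}V_i(K)\delta^{d-i}$ up to a dimensional constant absorbed later. The constraint $\|x_2-x_1\|=\delta$ puts $x_2$ on a sphere of radius $\delta$ around $x_1$, of surface $d\kappa_d\delta^{d-1}$, and the remaining $n-2$ coordinates lie in $B^d(x_1,\delta)$, contributing $(\kappa_d\delta^d)^{n-2}$. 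Multiplying these factors yields the first summand of~\eqref{eqn:Hgn}. The consolidated form with $c_dn$ finally follows by rewriting every $\delta^{d-i}$ and $\delta^{d-1-i}$ as $\kappa_d$-adjusted powers of $\kappa_d\delta^d$ with exponent $n-(i+1)/d$ and collecting the dimension-dependent constants into a single $c_d$.

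The step I anticipate will be the most delicate is the Fubini-style inequality for $\mathcal{H}^{nd-1}$ on sets of the form (lower-dimensional surface in one coordinate block) $\times$ (full-dimensional region in the remaining blocks); this is standard but needs the area or coarea formula applied to the coordinate projection onto the constrained block, and the rest of the argument is bookkeeping with the Steiner formula.
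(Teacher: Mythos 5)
Your proposal is correct and takes essentially the same route as the paper: part (a) via the observation that the maximizing constituent (a pairwise distance or a boundary distance) has gradient of norm $1$ or $\sqrt{2}$, and part (b) via the same case split according to whether the diameter or a boundary distance attains the value $\delta$, with the coordinates confined to $(\partial K)_\delta$, a sphere of radius $\delta$, the level surface $\{z:\rho(z,\partial K)=\delta\}$ and balls of radius $\delta$, followed by the Steiner formula. The Fubini-type bound for $\mathcal{H}^{nd-1}$ of such product-like sets, which you rightly flag as the delicate step, is invoked at the same (implicit) level of rigor in the paper's own proof, so your argument introduces no gap beyond what the paper itself leaves to the reader.
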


\begin{proof}
$g_n(x_1,\hdots,x_n)$ is always given by the distance of two points or by the distance of a point to the boundary of $K$. If we move one of these points exactly in the opposite direction $v$ of the second point or the boundary of $K$, the directional derivative is $\nabla_vg_n(x_1,\hdots,x_n)=||v||$ and
$$|\nabla_vg_n(x_1,\hdots,x_n)|\leq ||\nabla g_n(x_1,\hdots,x_n)||\ ||v||$$
implies $||\nabla g_n(x_1,\hdots,x_n)||\geq 1$ and thus a).

For the proof of b) we consider the same situations as in the proof of a). If there are two points $x_i,x_j\in\R^d$ such that $\rho(x_i,x_j)=\delta$, $x_i$ must be in $(\partial K)_{\delta}$, $x_j$ in a sphere around $x_i$ with radius $\delta$ and the remaining $n-2$ points must be in a ball with radius $\delta$ and center $x_i$. If $\rho(x_i,\partial K)=\delta$, $x_i$ must be in the set $\{y\in\R^d: \rho(y,\partial K)=\delta\}$ and the remaining points are in a ball with radius $\delta$ and center $x_i$. Combining these considerations with the Steiner formula (see Theorem 2.2.4 in \cite{SchneiderWeil1992}) yields (\ref{eqn:Hgn}).
\end{proof}\\
For $l=1,2,3,4$ let $\pi_l(y_1,\hdots,y_{|\pi|})\subset\{y_1,\hdots,y_{|\pi|}\}$ be the new variables that occur in the $l$-th function of $(f_i*f_i*f_j*f_j)_{\pi}$ and let $|\pi_l|$ stand for the number of these variables. We set $r=g_{|\pi|}(y_1,\hdots,y_{|\pi|})$ and $$\delta_1=g_{|\pi_1|}(\pi_1(y_1,\hdots,y_{|\pi|})),\hdots,\delta_4=g_{|\pi_4|}(\pi_4(y_1,\hdots,y_{|\pi|})).$$
Since $\pi_l(y_1,\hdots,y_{|\pi|})\subset \{y_1,\hdots,y_{|\pi|}\}$, it is easy to see that $\delta_l=g_{|\pi_l|}(\pi_l(y_1,\hdots,y_{|\pi|}))\leq g_{|\pi|}(y_1,\hdots,y_{|\pi|})=r$ for $l=1,2,3,4$. If there is a $y_j$ with $\rho(y_j,\partial K)=r$, we have at least two $l_1,l_2\in\{1,2,3,4\}$ such that $y_j\in \pi_{l_1}(y_1,\hdots,y_{|\pi|})$ and $y_j\in \pi_{l_2}(y_1,\hdots,y_{|\pi|})$, which implies $\delta_{l_1}=\delta_{l_2}=r$. The other case is that there are $y_{j_1}$ and $y_{j_2}$ such that $\rho(y_{j_1},y_{j_2})=r$. If there is a $l\in\{1,2,3,4\}$ with $y_1,y_2\in \pi_l(y_1,\hdots,y_{|\pi|})$, it follows directly $\delta_l=r$. Otherwise, $\pi\in\overline{\Pi}_{i,j}$ implies that we have a $y_{j_3}$ and $l_1,l_2\in \{1,2,3,4\}$ with $y_{j_1},y_{j_3}\in \pi_{l_1}(y_1,\hdots,y_{|\pi|})$ and $y_{j_2},y_{j_3}\in \pi_{l_2}(y_1,\hdots,y_{|\pi|})$. By the inequality $r=\rho(y_{j_1},y_{j_2})\leq \rho(y_{j_1},y_{j_3})+\rho(y_{j_3},y_{j_2})$, it follows $\max\{\delta_{l_1},\delta_{l_2}\}\geq \max\{\rho(y_{j_1},y_{j_3}),\rho(y_{j_3},y_{j_2})\}\geq r/2$. Hence, it holds
$r/2\leq\max_{l=1,\hdots,4}\delta_l\leq r$.

Together with the coarea formula Theorem \ref{thm:coarea}, Lemma \ref{lem:boundfng} and Lemma \ref{lem:gn}, we obtain
\begin{eqnarray*}
M_{\lambda} & \leq & \lambda^{|\pi|}\int_{(\R^d)^{|\pi|}} \tilde{f}_i(\delta_1)\tilde{f}_i(\delta_2)\tilde{f}_j(\delta_3)\tilde{f}_j(\delta_4) dy_1\hdots dy_{|\pi|}\\
& \leq & \lambda^{|\pi|}\int_{(\R^d)^{|\pi|}} \tilde{f}_i(\delta_1)\tilde{f}_i(\delta_2)\tilde{f}_j(\delta_3)\tilde{f}_j(\delta_4) ||\nabla g_{|\pi|}|| dy_1\hdots dy_{|\pi|}\\ 
&=& \lambda^{|\pi|} \int_0^\infty\int_{g_n^{-1}(r)}\tilde{f}_i(\delta_1)\tilde{f}_i(\delta_2)\tilde{f}_j(\delta_3)\tilde{f}_j(\delta_4) {\cal H}^{|\pi|d-1}(d(y_1,\hdots,y_{|\pi|}))dr\\
&\leq& \lambda^{|\pi|-4} \frac{16}{(i!)^2(j!)^2}\int_0^\infty\exp(-\lambda\kappa_d r^d/8^d)c_d |\pi|\sum_{i=0}^{d-1} \kappa_{d-i}V_i(K)(\kappa_d r^d)^{|\pi|-\frac{i+1}{d}}dr.
\end{eqnarray*}
By substitution and the definition of the Gamma function, we have
\begin{eqnarray*}
M_\lambda &\leq & \frac{16c_d|\pi|}{(i!)^2(j!)^2} \sum_{i=0}^{d-1}  \kappa_{d-i}V_i(K)8^{|\pi|d-i-1}\lambda^{\frac{i+1}{d}-4}\int_0^\infty \exp(-\lambda\kappa_d r^d/8^d)(\lambda\kappa_d r^d/8^d)^{|\pi|-\frac{i+1}{d}}dr\\
&=& \frac{16c_d|\pi|}{d(i!)^2(j!)^2} \sum_{i=0}^{d-1}  \kappa_{d-i}V_i(K)\lambda^{\frac{i}{d}-4}8^{|\pi|d-i}\kappa_d^{-\frac{1}{d}}\int_0^\infty \exp(-y)y^{|\pi|-1-\frac{i}{d}}dy\\
&=& \frac{16c_d|\pi|}{d(i!)^2(j!)^2} \sum_{i=0}^{d-1}  \kappa_{d-i}V_i(K)\lambda^{\frac{i}{d}-4}8^{|\pi|d-i}\kappa_d^{-\frac{1}{d}}\Gamma(|\pi|-i/d).
\end{eqnarray*}
Thus, each $M_{\lambda}$ has the order $\lambda^{-3-\frac{1}{d}}$ and, by Proposition \ref{prop:product}, all $R_{ij}$ and $\tilde{R}_i$ have at most the same order. As a consequence of Theorem \ref{thm:variance}, $\V \PV(K)$ has the order $\lambda^{-1-\frac{1}{d}}$, which means that $\frac{\sqrt{R_{ij}}}{\V \PV(K)}$ and $\frac{\sqrt{\tilde{R}_i}}{\V \PV(K)}$ have a order less or equal than $\lambda^{-\frac{1}{2}+\frac{1}{2d}}$ and 
$$\frac{\sqrt{R_{ij}}}{\V \PV(K)}, \frac{\sqrt{\tilde{R}_i}}{\V \PV(K)}\rightarrow 0 \quad \text{ as }\lambda\rightarrow\infty.$$
Now all assumptions of Theorem \ref{thm:abstractCLT} are satisfied for the volume of the Poisson-Voronoi approximation and Theorem \ref{thm:CLT} is a direct consequence.

\begin{remark}\label{remarkConvexity}\rm
In Theorem \ref{thm:CLT} and Theorem \ref{thm:variance}, we assume that the approximated set $K$ is convex. But the convexity is not necessary for the construction of the Poisson-Voronoi approximation such that it is a natural question, if one can extend our results to more general set classes. 
The convexity assumption is only needed to bound the volume and the surface area of the parallel sets $(\partial K)_r$ by the Steiner formula in the proofs of Theorem \ref{thm:variance} and Theorem \ref{thm:CLT} and to apply Lemma 4 from \cite{SchuettWerner1990} in the proof of Lemma \ref{lem:lowerf1}. Hence, one can extend the results to compact sets $M\subset\R^d$ that satisfy the following additional assumptions:
\begin{description}
 \item [(S1)] There are constants $c^{(i)}_M,i=1,\hdots,d,$ depending on $M$ such that
$$\Vol((\partial M)_r)\leq\sum_{i=1}^{d} c^{(i)}_M r^i\ \text{ and }\ {\cal H}^{d-1}(\partial((\partial M)_r))\leq \sum_{i=1}^{d} c^{(i)}_M r^{i-1}$$
with $(\partial M)_r=\{x\in\R^d: \rho(x,\partial M)\leq r\}$ for $r>0$.
\item [(S2)] It holds
$$\liminf\limits_{r\rightarrow 0}\Vol(\tilde M_r)/r>0$$
with $\tilde M_r=\{x\in M^C: \rho(x,M)\leq r, \Vol((B^d(x,2r)\setminus B^d(x,r))\cap M)\geq\frac{\kappa_dr^d}{2^d}\}$.
\end{description}
Assumption (S1) allows us to bound the volume and the surface area of the parallel sets $(\partial M)_r$ by a kind of Steiner formula. In the upper bound in Theorem \ref{thm:variance}, the intrinsic volumes must be replaced by the constants $c^{(i)}_M,i=1,\hdots,d$. Our proof of the lower bound in Theorem \ref{thm:variance} requires assumption (S2), which replaces a rolling ball result for convex sets from \cite{SchuettWerner1990}. Then the constant $\underline{C}$ in (\ref{eqn:variance}) and the lower bound for $\lambda$ depend on the limit inferior in (S2).

Since (S1) and (S2) are obviously true for convex sets, they still hold for polyconvex sets and, of course, for all polytopes.
\end{remark}

\paragraph{Acknowledgement} The author would like to thank Matthias Reitzner and Christoph Th\"ale for some useful discussions and valuable remarks on the topic of this paper.

\end{document}